\title[Bounded geometry wandering domains]{No bounded geometry wandering domains for sufficiently regular automorphisms}
\author[S. Merenkov]{Sergei Merenkov}
\address{Department of Mathematics, City College of New York and CUNY Graduate Center, New York, NY 10031, USA}
\email{smerenkov@ccny.cuny.edu}
\thanks{Supported by NSF grant DMS-1800180.}
\subjclass[2020]{37C05, 52C25}
\newcommand\C{{\mathbb C}}
\newcommand\N{{\mathbb N}}
\newcommand\D{{\mathbb D}}
\newcommand\Z{{\mathbb Z}}
\newcommand\R{{\mathbb R}}
\newcommand\T{{\mathbb T}}
\newcommand\hC{{\hat{\mathbb C}}}
\newcommand\dee{\partial}
\newcommand\id{\operatorname{id}}
\renewcommand\:{\colon}
\newcommand\sub {\subseteq}
\newcommand\Ga{\Gamma}
\newtheorem{theorem}{Theorem}[section]
\newtheorem{lemma}[theorem]{Lemma}
\newtheorem{proposition}[theorem]{Proposition}
\newtheorem{corollary}[theorem]{Corollary}
\newtheorem{question}[theorem]{Question}
\theoremstyle{definition}
\begin{document}


\abstract{
A question whether sufficiently regular manifold automorphisms may have wandering domains with controlled geometry is answered in the negative for quasiconformal or smooth homeomorphisms of $n$-tori, $n\ge2$, and 
hyperbolic surfaces. Besides control on geometry of wandering domains, the assumptions are either analytic, e.g., minimal sets having measure zero or supporting invariant conformal structures, or geometric, such as uniform relative separation of wandering domains.
} 
\endabstract

\maketitle

\setcounter{tocdepth}{1}
\tableofcontents

\section{Introduction}\label{s:Intro}

\noindent
A classical result of A.~Denjoy~\cite{De32} states  that the action of an orientation preserving $\mathcal C^{1+bv}$-diffeomorphism $f$ of the circle (i.e., $\mathcal C^1$-diffeomorphism with $Df$ of bounded variation) without periodic points is {minimal}, i.e., points have dense orbits on the circle. In particular, any such  $\mathcal C^2$-diffeomorphism is topologically conjugate to an irrational rotation. The condition $\mathcal C^{2}$ is H\"older-sharp in the sense that for each $\alpha\in[0,1)$ there are $\mathcal C^{1+\alpha}$ counterexamples~\cite{He79}.

Naturally, it is interesting to find out whether similar phenomena occur in higher dimensions.
A particular question of whether a diffeomorphism of 2-torus that is semiconjugate (but not conjugate) to a mi\-ni\-mal translation can have uniform conformal geometry originates from work of A.~Norton and D.~Sullivan~\cite{NS96}. 

\subsection{Denjoy type automorphisms}
In the terminology of \cite{NS96},  extended to higher dimensions, an orientation preserving homeomorphism $f$ of $n$-torus $\T^n,\ n\geq2$, has \emph{Denjoy type} if there exists a continuous map $h$ of $\T^n$ to itself, such that 

\noindent
--$h$ is homotopic to the identity,  

\noindent
--the set $V_h$ of \emph{non-trivial} values of $h$ (i.e., elements $p\in\T^n$ with $\#\{h^{-1}(p)\}>1$) is non-empty and countable, 

\noindent
--there is a \emph{minimal} translation $R$ of $\T^n$ 
(i.e., every point has a dense orbit under the action of $R$) 
such that $f$ and $R$ are semiconjugate, i.e.,
$$
h\circ f=R\circ h,
$$  

\noindent
--$f$ is not conjugate to a translation. 

Since $R$ is a translation and $h$ is homotopic to the identity, the functional equation above implies that the map $f$ is isotopic to the identity.
The \emph{minimal set} of $f$ is 
$$
\Lambda=\T^n\setminus\cup_{p\in V_h}\left\{(h^{-1}(p))^o\right\}, 
$$ 
where $X^o$ denotes the interior of a set $X$.
From this definition we conclude that if the interior of $h^{-1}(p)$ is non-empty for at least one $p$, then $\Lambda$ is a closed nowhere dense subset of $\T^n$ that is  \emph{completely  
invariant}, i.e., forward and backward invariant, under $f$. Finally, each non-empty $D_p=(h^{-1}(p))^o$ is a \emph{wandering domain} of $f$, i.e., $f^k(D_p)=f^l(D_p),\ k,l\in\Z$, implies $k=l$. Here and in what follows $f^k$ denotes the $k$'th iterate of $f$.  


\begin{theorem}\cite[Theorem~2]{NS96}\label{T:NS}
Let a diffeomorphism $f\in{\rm Diff}^1(\T^2)$ have Denjoy type, and suppose $f$ preserves a $\mathcal C^{1+Z}$ conformal structure on $\Lambda$. Then $f$ cannot be in ${\rm Diff}^{2+Z}(\T^2)$.
\end{theorem}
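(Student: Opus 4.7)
The plan is to derive a contradiction. Suppose $f\in{\rm Diff}^{2+Z}(\T^2)$ satisfies the hypotheses of the theorem. The strategy is to build an $f$-invariant measurable conformal structure on all of $\T^2$ with bounded dilatation, then invoke the measurable Riemann mapping theorem to obtain a quasiconformal conjugacy of $f$ to a biholomorphism of a complex torus. Any orientation-preserving biholomorphism of a complex torus isotopic to the identity is a translation, which would contradict the Denjoy-type hypothesis that $f$ is not conjugate to a translation.

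First I would extend the $f$-invariant $\mathcal C^{1+Z}$ Beltrami coefficient $\sigma$ from $\Lambda$ to a $\mathcal C^{1+Z}$ Beltrami coefficient $\tilde\sigma$ on $\T^2$ with $\|\tilde\sigma\|_\infty\le k<1$, via a Whitney-type extension compatible with the Zygmund scale. Then the iterates $\sigma_n:=(f^n)^*\tilde\sigma$ all coincide with $\sigma$ on $\Lambda$, but in general drift on each wandering domain $D_p$. The central analytic step is to show that this drift stays controlled: that $\|\sigma_n\|_\infty$ is uniformly bounded away from $1$ and that a weak-$*$ limit of Ces\`aro averages $N^{-1}\sum_{n=0}^{N-1}\sigma_n$ yields an $f$-invariant measurable $\sigma_\infty$ with $\|\sigma_\infty\|_\infty\le k$. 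With $\sigma_\infty$ in hand, the measurable Riemann mapping theorem delivers a quasiconformal homeomorphism $\Phi\co\T^2\to\T^2$ solving the Beltrami equation; then $g:=\Phi\circ f\circ\Phi^{-1}$ is a holomorphic self-map of the resulting complex torus isotopic to the identity, hence a translation, and $f$ is topologically conjugate to a translation, contradicting the Denjoy-type assumption.

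The principal obstacle lies in the distortion estimate that underwrites the second step. What is needed is a two-dimensional analog of the classical Denjoy--Koksma distortion inequality, extracted from the Zygmund regularity of $D^2f$ via a Koebe-type or cross-ratio control along $f$-orbits, strong enough to prevent the pulled-back coefficients $\sigma_n$ from concentrating near the boundary of the unit disk in each fiber. This is precisely the borderline place where the hypothesis $\mathcal C^{2+Z}$ should matter: weakening it to $\mathcal C^{2+\alpha}$ with $\alpha<1$ should make such distortion bounds fail, paralleling Herman's counterexamples on the circle. Even granted the pointwise bound on $\sigma_n$, one must still check that the invariance of $\sigma$ on the large set $\Lambda$, combined with the orbital distortion on the wandering domains (which appear in a null measure in the well-behaved case where $\Lambda$ has full measure, but need not in general), produces an honest bounded Beltrami coefficient in the limit rather than a degenerate one. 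Verifying this convergence, with the Zygmund regularity as the single driving hypothesis, is the technical heart of the proof.
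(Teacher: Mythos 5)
The first thing to note is that the paper contains no proof of Theorem~\ref{T:NS}: it is quoted verbatim from \cite{NS96}, and the only indication of how it is proved is the remark that ``Norton--Sullivan's proofs rely on Ahlfors--Bers integrability results.'' The actual argument integrates the invariant $\mathcal C^{1+Z}$ structure by a $\mathcal C^{2+Z}$ change of coordinates (a $\mathcal C^{1+Z}$ Beltrami coefficient has $\mathcal C^{2+Z}$ isothermal coordinates), so that after conjugation $f$ is still of Denjoy type, still in ${\rm Diff}^{2+Z}(\T^2)$, and now \emph{conformal along} $\Lambda$; the contradiction is then extracted from a Denjoy-style distortion estimate along the orbit of a wandering domain, using the Zygmund modulus of continuity of $Df$ together with the summability of the areas of the wandering domains. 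Your proposal takes a different route, and the step you yourself flag as the ``technical heart'' is a genuine gap, not a technicality.

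Concretely: (1) No mechanism is offered for the claim that $\|\sigma_n\|_\infty=\|(f^n)^*\tilde\sigma\|_\infty$ stays uniformly bounded away from $1$. This is equivalent to the iterates of $f$ being uniformly quasiconformal on the wandering domains, which is false for Denjoy-type maps of slightly lower regularity (McSwiggen's $\mathcal C^{3-\epsilon}$ examples), so any such bound must consume the full strength of $\mathcal C^{2+Z}$ together with the invariance on $\Lambda$; nothing in the proposal does this. (2) Even granting the uniform bound, Ces\`aro averaging does not yield an invariant coefficient: the pullback acts on the pointwise fiber $\D$ of Beltrami coefficients by hyperbolic isometries (M\"obius transformations), not linearly, so Euclidean averages are neither asymptotically invariant nor obviously admissible. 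The correct device is the hyperbolic barycenter or smallest-enclosing-disk center of Sullivan~\cite{Su78} and Tukia~\cite{Tu80} (cf.\ Proposition~\ref{P:Tukia}), and that construction again presupposes exactly the uniform bound from (1). (3) Your endgame would prove something strictly stronger than the theorem, namely that $f$ preserves a bounded measurable conformal structure on all of $\T^2$ and is therefore quasiconformally conjugate to a translation; this is closely tied to Question~\ref{Q:NS2}, recorded in the paper as open. That is strong evidence that the theorem is not accessible by manufacturing a global invariant structure, and that the regularity hypothesis must instead be spent on a distortion argument after the Ahlfors--Bers straightening.
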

Here, superscript $k+Z,\ k=1,2$, means that the $k$th derivative satisfies the Zygmund condition.
\begin{corollary}~\cite[Corollary~1]{NS96}\label{C:NS96}
No Denjoy type $f\in{\rm Diff}^3(\T^2)$ exists such that the wandering domains of $f$ are non-degenerate  geometric disks. 
\end{corollary}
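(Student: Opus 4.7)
The plan is to reduce the corollary to Theorem~\ref{T:NS} by showing that, under the assumption that every wandering domain is a non-degenerate round disk, $f$ preserves the standard Euclidean conformal structure $c_0$ on $\Lambda$. Since $c_0$ is $\mathcal C^\infty$ and hence $\mathcal C^{1+Z}$, and since $\Lambda$ is completely invariant, Theorem~\ref{T:NS} will then force $f\notin{\rm Diff}^{2+Z}(\T^2)$, contradicting the hypothesis.

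The heart of the argument is to prove that $Df(x)$ is a conformal linear map for every $x\in\Lambda$. Since the wandering domains $D_1,D_2,\ldots$ are disjoint round disks in $\T^2$ of radii $r_1,r_2,\ldots$, the area bound $\sum_n \pi r_n^2\le\area(\T^2)$ gives $r_n\to 0$. Because the domains are non-degenerate, $\Lambda$ is a closed nowhere dense completely invariant set, so $\cup_n D_n$ is dense in $\T^2$. Thus every $x\in\Lambda$ admits a sequence of wandering disks $D_{n_k}$ with centres $c_{n_k}\to x$ and radii $r_{n_k}\to 0$.

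The key computation is the second-order Taylor expansion: since $f\in\mathcal C^{2+Z}\subset \mathcal C^2$ on the compact manifold $\T^2$, there is a constant $C$ with
$$
\left| f(c_{n_k}+v)-f(c_{n_k})-Df(c_{n_k})v\right| \le C\,|v|^2, \qquad |v|\le r_{n_k}.
$$
Hence $f(D_{n_k})$ lies within Hausdorff distance $O(r_{n_k}^2)$ of the solid ellipse obtained by applying the affine map $u\mapsto f(c_{n_k})+Df(c_{n_k})u$ to the closed disk of radius $r_{n_k}$ centred at the origin. This ellipse has semi-axes $r_{n_k}\sigma_1(c_{n_k})$ and $r_{n_k}\sigma_2(c_{n_k})$, where $\sigma_1\ge\sigma_2>0$ denote the singular values of $Df$. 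Because $f(D_{n_k})$ is itself a round disk (by assumption), comparing its maximum and minimum distance to its centre with the two semi-axes forces
$$
\sigma_1(c_{n_k})-\sigma_2(c_{n_k})=O(r_{n_k})\longrightarrow 0.
$$
Continuity of $Df$ and $c_{n_k}\to x$ then yield $\sigma_1(x)=\sigma_2(x)$, i.e., $Df(x)\in\R_+\cdot SO(2,\R)$, which is exactly the statement that $Df(x)$ preserves $c_0$ on $T_x\T^2$.

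The main obstacle is that a given $x\in\Lambda$ might a priori lie on the boundary of a single wandering disk and be isolated from all other wandering disks on the opposite side, preventing the selection of distinct centres $c_{n_k}\to x$. Such exceptional points form, however, a meagre subset of $\Lambda$; on its dense complement the preceding argument gives conformality of $Df$, and by continuity of $Df$ this extends to all of $\Lambda$. At that stage Theorem~\ref{T:NS} applied to $(f,c_0)$ delivers the desired contradiction, proving the corollary.
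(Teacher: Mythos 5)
Your proof is correct and follows the route the paper intends for this corollary: a blow-up/Taylor argument at disks accumulating on $\Lambda$ shows that $Df$ is conformal along $\Lambda$ (cf.\ the second part of Lemma~\ref{L:UQC}, round disks being a special case of domains of the same generic shape), so $f$ preserves the smooth standard conformal structure on $\Lambda$ and Theorem~\ref{T:NS} yields the contradiction. The ``main obstacle'' you raise is actually vacuous: the closures of distinct wandering domains lie in distinct fibers of $h$ and are therefore pairwise disjoint, so no point of $\Lambda$ can be approached by only finitely many wandering disks, and every $x\in\Lambda$ admits centres $c_{n_k}\to x$ with $r_{n_k}\to 0$ directly, without any appeal to meagreness.
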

This corollary was generalized by A.~Navas~\cite{Na18} to $\mathcal C^{n+1}$-dif\-fe\-o\-mor\-phisms of $\T^n,\ n\ge2$, and by the author~\cite{Me19} to $\mathcal C^{1}$-diffeomorphisms of $\T^n,\ n\ge2$.

A natural relaxation of the geometric disks assumption as in Corollary~\ref{C:NS96} is requirement of bounded geometry.
A collection of domains $\{D_i\}_{i\in I}$ in a metric space
is said to have \emph{bounded geometry} if  there exist $C\geq1, p_i\in D_i$, and $0<r_i\leq R_i,\ i\in I$, with
\begin{equation}\label{E:Bddgeom}
B(p_i, r_i)\subseteq D_i\subseteq B(p_i, R_i),\quad {\rm and}\quad {R_i}/{r_i}\leq C, 
\end{equation}
where $B(p,r)$ denotes the ball centered at $p$ of radius $r$. 
The following is the first main result; see Section~\ref{S:Prelim} for a definition of  quasiconformality.

\begin{theorem}\label{T:tori}
If $f$ is a Denjoy type homeomorphism of $\T^n, n\geq 2$, whose minimal set $\Lambda$  has measure zero, then either $f$ cannot be quasiconformal or wandering domains of $f$ do not have bounded geometry.
\end{theorem}

Theorem~\ref{T:NS} also motivated the following questions.
\begin{question}[A.~Norton, D.~Sullivan~\cite{NS96}]\label{Q:NS}
Can $f\in {\rm Diff}^{2+Z}(\T^2)$ have Denjoy type? 
Can Denjoy type $f\in {\rm Diff}^{2+Z}(\T^2)$ preserve a measurable bounded conformal structure on $\Lambda$?
\end{question}

In higher dimensions we allow $\Lambda$ to have positive measure and give a partial answer to the second question.  We say that a set $X$ in a metric space is \emph{locally rectifiably connected} if for each $x\in X$ and a neighborhood $V$ of $x$  there is a neighborhood $U\subseteq V$ of $x$ such that any two points of $U\cap X$ can be joined by a rectifiable path in $U\cap X$.
\begin{theorem}\label{T:tori2}
Let $n\ge3$,  and $f$ be a Denjoy type homeomorphism of $\T^n$ such that the minimal set $\Lambda$ of $f$ is not all of $\T^n$, is connected and locally rectifiably connected. Assume further that the Lebesgue density points of $\Lambda$ 
are dense in $\Lambda$. 
Then $f$ cannot be in ${\rm Diff}^3(\T^n)$ and preserve the standard conformal structure on $\Lambda$. 
\end{theorem}

\subsection{Surface automorphisms}
More generally, let $(\mathcal M,g)$ be a \emph{closed}, i.e., compact and with no boundary, Riemannian $n$-manifold. 
Following terminology of F.~Kwakkel and V.~Mar\-ko\-vic~\cite{KM10}, we say that a homeomorphism $f$  of $\mathcal M$ \emph{permutes} a dense collection of domains if there exists a connected, completely invariant, and nowhere dense  compact subset $\Lambda\subset \mathcal M$ such that the collection $\{D_i\}_{i\in I}$ of connected components  of the complement $\mathcal M\setminus \Lambda$ has the following properties: for each $i\in I$, the domain $D_i$ is the interior of its closure $\overline{D_i}$,  the closures $\overline{D_i},\ i\in I$, are pairwise disjoint, and for each $i\in I,k\in\Z, \ f^k(D_i)\cap D_i\neq\emptyset$ implies $k=0$. Abusing terminology, we refer to $\Lambda$ as a \emph{minimal set} of $f$ and to the domains $D_i,\ i\in I$, as \emph{wandering domains}, as in the case of Denjoy type homeomorphisms. Note that neither do we assume that the orbit of a single $\overline{D_i}$ is dense in $\Lambda$, nor that there is a unique orbit of wandering domains under $f$. 

In the spirit of lowering regularity, in~\cite{KM10}, F.~Kwakkel and V.~Mar\-ko\-vic posed the following question.
\begin{question}[F.~Kwakkel, V.~Markovic~\cite{KM10}]\label{Q:PosEntr}
If $\mathcal M$ is a closed Riemann surface
equipped with the canonical metric induced from the standard conformal metric of the universal cover, do  there exist diffeomorphisms $f\in{\rm Diff}^1(\mathcal M)$ with positive topological entropy that permute a dense collection of domains with bounded geometry?
\end{question}

F.~Kwakkel and V.~Markovic proved that the answer to Question~\ref{Q:PosEntr} is negative under a stronger assumption that $f\in{\rm Diff}^{1+\alpha}(\mathcal M)$, where $\alpha>0$; \cite[Theorem~A]{KM10}.  
Theorem~\ref{T:tori} answers Question~\ref{Q:PosEntr} in the negative for  Denjoy type quasiconformal homeomorphisms of $\T^2$, 
assuming that minimal sets have measure zero. 
Our next result states that, under additional geometric conditions on the wandering domains, rather than the entropy condition on $f$, the answer to Question~\ref{Q:PosEntr} is also negative; see Section~\ref{S:Prelim} for the uniform quasidisks and uniform relative separation conditions.

\begin{theorem}\label{T:surf}
Let $\mathcal M$ be a closed Riemann surface that is either hyperbolic  or flat  2-torus $\T^2$. Then there is no orientation preserving $f\in{\rm Diff}^1(\mathcal M)$ that permutes a dense collection of uniform quasidisks that are uniformly relatively separated.
\end{theorem}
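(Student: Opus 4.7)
The plan is to argue by contradiction: assume such an $f$ exists. The strategy is a tangent/blow-up analysis at points of the minimal set $\Lambda$, combined with a BKM09-type quasisymmetric rigidity theorem for carpets whose complementary components are uniform quasidisks that are uniformly relatively separated, to force the derivative of $f$ along $\Lambda$ to be conformal. First, I would pass to the Riemannian universal cover $\pi\co\tilde{\mathcal M}\to\mathcal M$, where $\tilde{\mathcal M}$ is $\H$ in the hyperbolic case and $\R^2$ for the flat torus. Contractibility of $\overline{D_i}$ in $\mathcal M$ ensures that the wandering domains lift to a disjoint family $\{\tilde D_i\}$ in $\tilde{\mathcal M}$, and the uniform quasidisk and uniform relative separation conditions pass to the lift (Euclidean and hyperbolic metrics being bi-Lipschitz equivalent on compact subsets of $\H$). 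The lifted map $\tilde f$ is then a $\mathcal C^1$ diffeomorphism of $\tilde{\mathcal M}$ permuting $\{\tilde D_i\}$.

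Next, I would fix $p\in\tilde\Lambda = \pi^{-1}(\Lambda)$ and, in local coordinates centered at $p$, consider the rescalings
$$
\tilde f_\lambda(x) = \frac{\tilde f(p+\lambda x) - \tilde f(p)}{\lambda}.
$$
The $\mathcal C^1$ hypothesis gives $\tilde f_\lambda \to L := D\tilde f(p)$ uniformly on compact sets as $\lambda\to 0$, with $L$ a non-degenerate linear map of $\R^2$. Simultaneously, rescaling the wandering domains meeting $B(p,R\lambda)$ by $\lambda^{-1}$ and invoking compactness for families of uniformly quasisymmetric disks that are uniformly relatively separated yields, along a subsequence $\lambda_n\to 0$, a limiting tangent carpet $\mathcal T_p$ at $p$ and, similarly, $\mathcal T_{\tilde f(p)}$ at $\tilde f(p)$, both in $\R^2$ and retaining the same uniform quasidisk and separation constants. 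Passing to the limit identifies $L$ as a bi-Lipschitz bijection $\mathcal T_p\to\mathcal T_{\tilde f(p)}$. Using the orbit $\{\tilde f^n(p)\}$ together with suitable normalizations of the iterated derivatives $D\tilde f^n(p)$ (rescaled by their operator norms to avoid degeneration), one extracts along a further subsequence a bi-Lipschitz self-map of a single limit tangent carpet, to which a BKM09-type quasisymmetric rigidity theorem for carpets with uniform quasidisk peripheral components and uniform relative separation applies, forcing this self-map, and consequently $L$ itself, to be a conformal linear map, i.e., a positive scalar multiple of a rotation.

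Once $D\tilde f(p)$ is conformal for every $p\in\tilde\Lambda$, continuity of the derivative together with a normal family argument on the restrictions $\tilde f|_{\tilde D_i}$ (composed with the uniformly quasisymmetric Riemann maps supplied by the uniform quasidisk hypothesis) propagates conformality to all of $\tilde{\mathcal M}$, so $f$ descends to a conformal self-diffeomorphism of $\mathcal M$. But the conformal automorphism group of a closed hyperbolic surface is finite, and that of $\T^2$ is a finite extension of the translation group; in neither case can such an automorphism act freely on a countable dense collection of domains with connected nowhere dense complement, which yields the desired contradiction. The principal obstacle is the rigidity step: BKM09-type rigidity is most naturally stated for self-maps of a single carpet, whereas the derivative $L$ connects two \emph{a priori} different tangent carpets, and without the $\mathcal C^{1+\alpha}$ control on $D\tilde f^n(p)$ along orbits exploited in \cite{KM10}, the normalization and subsequence extraction that produces a genuine bi-Lipschitz self-map in the limit must rely entirely on the geometric hypotheses---this is precisely where the uniform quasidisk and uniform relative separation conditions play their essential role.
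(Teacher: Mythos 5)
Your blow-up strategy runs into three genuine obstructions. First, the rigidity step is not available in the generality you need: BKM09-type rigidity (quasisymmetric self-maps of a Schottky set or carpet are M\"obius) requires the carpet to have \emph{measure zero}, and Theorem~\ref{T:surf} makes no such assumption on $\Lambda$ --- your tangent carpets $\mathcal T_p$ may have positive area, and for positive-measure Schottky sets such rigidity fails. Moreover, even if the normalized products of derivatives converged to a conformal self-map of a single limit carpet, conformality of that limit would not force conformality of the individual factor $L=D\tilde f(p)$. In fact one should not expect $D\tilde f(p)$ to be conformal under the stated hypotheses at all: the paper only establishes (Lemma~\ref{L:UQC}) that the derivatives of all iterates are \emph{uniformly quasiconformal} along $\Lambda$, and then invokes Proposition~\ref{P:Tukia} (Sullivan--Tukia) to produce an invariant measurable bounded conformal structure; conformality is achieved only for a quasiconformally \emph{conjugated} map along the image of $\Lambda$, never for $f$ itself.

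Second, your propagation step is a non sequitur: even if $Df$ were conformal at every point of $\tilde\Lambda$, this says nothing about $f$ on the wandering domains, where $f$ is merely a $\mathcal C^1$ diffeomorphism onto another wandering domain; there is no normal family of conformal maps to extract there, and $f$ does not descend to a conformal automorphism of $\mathcal M$. Consequently the intended contradiction with the finiteness of the conformal automorphism group is unavailable. The paper's route is structurally different: after Lemma~\ref{L:UQC} and Proposition~\ref{P:Tukia} reduce to a quasiconformal $f$ preserving a bounded conformal structure on $\Lambda$, one lifts to the universal cover, applies Bonk's uniformization (Theorem~\ref{T:Unif}, which is precisely where the uniform quasidisk and uniform relative separation hypotheses enter) to replace $\tilde\Lambda$ by a Schottky set, straightens the invariant structure equivariantly, extends the resulting map $G$-equivariantly over the whole sphere (Lemma~\ref{L:EqvExt2}), proves that this \emph{extension} is M\"obius (density points on the dissipative part, Lemma~\ref{L:NestedBalls} on the conservative part, then Weyl's lemma), shows it is an intrinsic isometry, and finally contradicts the wandering property because only finitely many disjoint disks of a fixed radius can meet a fundamental domain.
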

Note that here we do not assume that the minimal set $\Lambda$ of $f$ has measure zero. 
Theorem~\ref{T:surf} is false in the case when $\mathcal M=\hC$, as can be seen by taking $f(z)=\lambda z,\ \lambda>1$. One needs to ``fill in'' the fundamental annulus $\{z\: 1\le |z|<\lambda\}$ by uniformly relatively separated disks 
and spread them out using the dynamics of $f$. The simple details are left to the reader. We require the uniform quasidisks and uniformly relatively separated conditions in Theorem~\ref{T:surf} since we use M.~Bonk's round quasiconformal  uniformization result from~\cite{Bo11}. The assumption that the collection of permuted domains consists of uniform quasidisks is stronger than these domains having bounded geometry, but is in the same spirit nevertheless. The necessity of the assumption of uniform relative separation, as far as quasiconformal uniformization is concerned, is illustrated by~\cite[Proposition~3.6]{Nt20}. 
Finally, an interesting feature of Theorem~\ref{T:surf} is that a regularity requirement on $f$, which is an analytic condition, along with geometric assumptions on the minimal set yield a dynamical conclusion. This is in contrast to~\cite[Theorem~A]{KM10}, where the assumption on topological entropy is dynamical.
A generalization of Theorem~\ref{T:surf}
is the following theorem.
\begin{theorem}\label{T:surf2}
Let $\mathcal M$ be a closed Riemann surface that is either hyperbolic  or flat  2-torus $\T^2$. Then there is no quasiconformal homeomorphism $f$ of $\mathcal M$ that permutes a dense collection of uniformly relatively separated uniform quasidisks, and such that $f$ preserves a measurable bounded conformal structure on the minimal set $\Lambda$.
\end{theorem}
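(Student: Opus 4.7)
The plan is to reduce Theorem~\ref{T:surf2} to Theorem~\ref{T:surf} through two successive quasiconformal conjugations that preserve the geometric data of the dynamical system while upgrading its analytic regularity from merely quasiconformal to smooth. The first conjugation, supplied by Lemma~\ref{L:UQC}, is a round-uniformization step of Bonk type \cite{Bo11}: the uniform quasidisk and uniform relative separation hypotheses produce a quasiconformal self-homeomorphism $\phi$ of $\mathcal M$ such that each image $\phi(D_i)$ is a round disk in the canonical metric of $\mathcal M$. The conjugate $f_1 := \phi \circ f \circ \phi^{-1}$ then permutes the dense collection of round disks $\{\phi(D_i)\}$, and it still preserves a bounded measurable conformal structure on the transported minimal set $\phi(\Lambda)$ (the pushforward of the original one), since these properties are stable under quasiconformal conjugation.

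The second conjugation is provided by Proposition~\ref{P:Tukia}, a Tukia--Sullivan type straightening. One extends the $f_1$-invariant bounded conformal structure from $\phi(\Lambda)$ to an $f_1$-invariant bounded measurable conformal structure defined on all of $\mathcal M$, and then solves the associated Beltrami equation to obtain a quasiconformal self-homeomorphism $\psi$ of $\mathcal M$ that straightens this structure to the standard one. The resulting conjugate $g := \psi \circ f_1 \circ \psi^{-1}$ preserves the standard conformal structure, hence is a conformal automorphism of $\mathcal M$ and, in particular, a smooth element of ${\rm Diff}^1(\mathcal M)$. Quasiconformal maps preserve both the uniform-quasidisk and the uniform-relative-separation conditions (with possibly worse constants), so the family $\{(\psi\circ\phi)(D_i)\}$ remains a dense collection of uniformly relatively separated uniform quasidisks permuted by $g$. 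Thus $g$ satisfies all hypotheses of Theorem~\ref{T:surf} (if necessary after replacing $g$ by $g^2$ to ensure orientation preservation), which forbids its existence, yielding the desired contradiction.

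The main obstacle lies in the extension step inside Proposition~\ref{P:Tukia}: building an $f_1$-invariant measurable conformal structure on all of $\mathcal M$ that agrees with the given structure on $\phi(\Lambda)$ and is globally bounded. The naive choice of fixing a representative in each $f_1$-orbit of round disks, placing the standard structure there, and transporting along the orbit risks producing an unbounded structure, since the iterates of a $K$-quasiconformal map are only $K^{|k|}$-quasiconformal. The round-disk geometry of the wandering components, however, offers additional freedom: any two round disks are M\"obius-equivalent, so one may instead place a standard structure on every wandering disk independently, and then verify that invariance and boundedness on the wandering part can be arranged by a disk-by-disk M\"obius adjustment compatible with the given bounded structure on $\phi(\Lambda)$.
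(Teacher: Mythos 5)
Your proposed reduction runs in the wrong direction and rests on two steps that do not hold. First, Lemma~\ref{L:UQC} is not a uniformization statement: it only says that a $\mathcal C^1$-diffeomorphism permuting bounded-geometry domains has uniformly quasiconformal iterates along $\Lambda$. The actual round uniformization is Theorem~\ref{T:Unif} (Bonk), and it produces a quasiconformal map $\beta$ of the \emph{sphere} $\hC$ applied to the \emph{lifted} configuration $\tilde\Lambda$ in the universal cover; there is no reason this map descends to a self-homeomorphism $\phi$ of $\mathcal M$, because $\beta$ need not be equivariant with respect to the deck group. A substantial part of the paper's argument is devoted to exactly this point: after uniformizing and straightening, one must show that the conjugated deck transformations become M\"obius (indeed Euclidean translations in the torus case) before any quotient structure can be recovered. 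Second, your straightening step does not deliver what you need. Proposition~\ref{P:Tukia} produces an $f$-invariant Beltrami form \emph{on $\Lambda$ only}; extending it to a globally $f_1$-invariant bounded structure is precisely the obstruction you flag at the end, and your proposed fix does not resolve it: putting the standard structure on each round disk is not $f_1$-invariant, since the pullback under $f_1$ of the standard structure on $f_1(D)$ is not standard on $D$ unless $f_1$ is already conformal between the disks. The paper sidesteps this entirely by extending the Beltrami coefficient to be invariant under the Schottky \emph{reflection} group $G_S$ (not under $f$), then using the $G$-equivariant quasisymmetric extension (Lemma~\ref{L:EqvExt2}) together with Lemma~\ref{L:NestedBalls} on the conservative part and a density-point argument on the dissipative part to show the conjugated maps are M\"obius. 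Without something of this sort, your map $g$ is conformal only along the image of $\Lambda$ and merely quasiconformal on the (typically positive-measure) union of wandering domains, so it is not a conformal automorphism, need not be $\mathcal C^1$, and Theorem~\ref{T:surf} does not apply.

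There is also a structural issue: in the paper Theorem~\ref{T:surf} is \emph{deduced from} Theorem~\ref{T:surf2} (via Lemma~\ref{L:UQC} and the first part of Proposition~\ref{P:Tukia}), so invoking Theorem~\ref{T:surf} to prove Theorem~\ref{T:surf2} would be circular unless you supply an independent proof of Theorem~\ref{T:surf}. Even granting a conformal automorphism $g$ at the end, the contradiction you would draw (a conformal automorphism of a closed hyperbolic surface has finite order; on the torus it is an isometry) is essentially the endgame of the paper's proof, but reaching that point requires the Schottky-group equivariance machinery, the Liouville/Weyl step showing the extended maps are M\"obius, and the final counting argument that infinitely many disjoint disks of equal intrinsic radius cannot exist --- none of which your outline supplies.
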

This theorem gives a negative answer to the second question in~\ref{Q:NS} for more ge\-ne\-ral surfaces and under a much weaker topological and regularity assumptions on the automorphism $f$, albeit with uniform geometric assumptions on the minimal set $\Lambda$ of $f$.

The proofs of the main results, Theorem~\ref{T:tori}, Theorem~\ref{T:tori2}, Theorem~\ref{T:surf}, and Theorem~\ref{T:surf2} require some preparation as well as invocation of previously known results, notably the quasiconformal uniformizaton by M.~Bonk~\cite[Theorem~1.1]{Bo11}. 
The main tool used in the proof of Theorem~\ref{T:tori} and in establishing a non-example of  Proposition~\ref{P:Ex}  is a certain version of transboundary modulus discussed in Section~\ref{S:TMS}. The transboundary modulus or extremal length in the classical setting of multiply connected domains was introduced by O.~Schramm~\cite{Sch95}, and a certain discrete modification of it, called carpet modulus, was used successfully in the fractal setting in~\cite{Bo11} and~\cite{BM13}. The novelty in the present paper is that the transboundary modulus is used for fractals   where we allow for continuous parts of mass distributions to account for the case when minimal sets have positive measure. This is crucial for Proposition~\ref{P:Ex}.
Other results employed, even though well known for maps between open regions in $\R^n$ or the sphere $\hC$, are used here in less traditional setups of maps between fractals. An example is Proposition~\ref{P:Tukia}, used in the proof of Theorem~\ref{T:surf}. It is well known for  quasiconformal groups on open sets since the late 1970s or early 1980s from the works of  D.~Sullivan~\cite{Su78} and P.~Tukia~\cite{Tu80}, but it has not been presented in the fractal setting addressed here. Also, the proof of Theorem~\ref{T:tori2} uses essentially  the smooth Liouville Theorem  on conformal maps in higher dimensions, see, e.g.,~\cite{IM01}, but for positive measure fractals rather than for domains in $\R^n$.
While the proofs in the new settings often follow similar lines, the author felt it necessary and justified to include them or at least provide details where the two settings differ. However, if the reader is well familiar with the results in more established circumstances,  there is not much harm in skipping some of the proofs here, e.g., the proofs of Proposition~\ref{P:Extremaldistr}, as well as the auxiliary lemmas leading to its proof, and Proposition~\ref{P:Tukia}. Also, even though Lemma~\ref{L:UQC} appeared in the work of F.~Kwakkel and V.~Markovic~\cite[Lemma~2.3]{KM10}, a proof is included here since it is important and short.

\section{Quasiconformal and related maps}\label{S:Prelim}

\noindent
Throughout, we denote by $\hC$ the Riemann sphere, $\C$ the complex plane, and $\D$ the unit disk in $\C$. Also, by $\T$ we denote the unit circle.
Whenever convenient, we identify $\hC$ with $\C\cup\{\infty\}$ via the stereographic projection.
In what follows, we typically assume that a given Riemannian manifold $\mathcal M$ is either $\R^n$ with the Euclidean metric, $n$-torus $\T^n,\ n\ge2$, endowed with the metric induced by the Euclidean metric of $\R^n,\ n\ge2$, referred to as a \emph{flat} $n$-torus, or a Riemann surface endowed with the metric of constant curvature $1, 0$, or $-1$, coming from the corresponding metric of the universal cover $\hC,\C$, or $\D$, respectively. As is standard, we refer to metric of constant curvature 1 as \emph{spherical}, constant curvature 0 as \emph{flat}, and constant curvature $-1$ as \emph{hyperbolic}. 

A non-constant orientation preserving  homeomorphism  $f\: U \to V$
between  open  subsets $U$ and $V$ of a Riemannian $n$-manifold $\mathcal M$ is called \emph{quasiconformal} if the map $f$ is in the Sobolev space $W_{\rm loc}^{1,n}$
and there exists a positive constant $K$ such that the formal differential matrix $Df =(\partial f_j/\partial x_i)$
satisfies
\begin{equation}\label{E:Kqc}
\sup\left\{\|Df(p)(u)\|^n\:\ u\in\R^n,\ \|u\|\le1\right\}\leq K\cdot \det\left(Df(p)\right)
\end{equation}
for almost every $p \in U$. 
The assumption $f\in W_{\rm loc}^{1,n}$ means that $f$ and the first distributional partial derivatives of $f$ are locally in the Lebesgue space $L^n$.  One also refers to $f$ as $K$-\emph{quasiconformal}, and $K$ is called a \emph{dilatation} of $f$. If $K$ can be chosen to be 1 in Equation~\eqref{E:Kqc}, the map $f$ is called \emph{conformal}. In the case $f$ is conformal at a point $p$, we denote $Df(p)$ by $f'(p)$.
It is well known that compositions and inverses of quasiconformal maps are quasiconformal, and the same holds for conformal maps.
If $X\subseteq U$ is a measurable subset and $H$ is a collection of quasiconformal maps $h\: U\to V$ such that there exists a positive constant $K$ with each $h\in H$ satisfying Equation~\eqref{E:Kqc} at almost every point of $X$, then we say that the elements of $H$ are \emph{uniformly quasiconformal along} $X$. If $K=1$, elements of $H$ are said to be \emph{conformal along} $X$.

A homeomorphism $f\: X\to \widetilde X$ between metric spaces $(X, d_X)$ and $(\widetilde X, d_{\widetilde X})$ is called \emph{quasisymmetric} if there exists a \emph{distortion homeomorphism} $\eta\:[0,\infty)\to[0,\infty)$, such that
$$
\frac{d_{\widetilde X}(f(o),f(p))}{d_{\widetilde X}(f(o),f(q))}\leq\eta
\bigg(\frac{d_X(o,p)}{d_X(o,q)}\bigg),
$$
for every triple of distinct points $o,p$, and $q$ in $X$. We also call $f$ an $\eta$-\emph{quasisymmetric} map in this case. Geometrically, for a map 
$f$ to be quasisymmetric it means that the image under $f$ of every ball $B$ in $X$ centered at any point $p$ contains a ball in $\widetilde X$ of radius $r$ and is contained in a ball of radius $R$, both centered at $f(p)$, and such that $R/r\leq C$, where $C$ does not depend on $B$. Quasisymmetric maps are global analogues of conformal maps in the context of arbitrary metric spaces, in particular those spaces that do not possess any meaningful infinitesimal structures. Quasisymmetric homeomorphisms between open sets in $\C$ or $\hC$ are quasiconformal. As a partial converse, global quasiconformal maps of $\C$ or $\hC$ are quasisymmetric. See~\cite{He01} for background on quasisymmetric maps. 

Recall, a domain $D$ in the plane $\C$ or the sphere $\hC$ is a  \emph{quasidisk}, or a $K$-\emph{quasidisk},  if it is the image of the unit disk $\D$ under a $K$-quasiconformal map of $\C$ or $\hC$, respectively. Likewise, a \emph{quasicircle}, or a $K$-\emph{quasicircle}, is the image of the unit circle $\T$ under a $K$-quasiconformal map of $\C$ or $\hC$, respectively. The boundary curve of a quasidisk is thus a {quasicircle}. 

More generally, a \emph{quasicircle} in a metric space  $X$ is a Jordan curve $J$ in $X$ that is the image of $\T$ under a quasisymmetric map. 
In a Riemannian $n$-manifold $\mathcal M$ (or, more generally, in a doubling metric space), a Jordan curve $J$ is a quasicircle if and only if the following \emph{two point condition} is satisfied: there exists $C>0$ such that 
\begin{equation}\label{E:BT}
{\rm diam}(J_{pq})\leq C\cdot d_{\mathcal M}(p,q),\quad {\rm for\ all}\ p,q\in J,
\end{equation}
where $J_{pq}$ is the smaller diameter subarc of $J$ with endpoints $p$ and $q$, and ${\rm diam}(J)$ stands for the diameter of $J$; see~\cite{TV80}. 
We say that a family $\{J_i\}_{i\in I}$ of Jordan curves in a metric space $X$ consists of \emph{uniform quasicircles} if there exists a distortion function $\eta$ such that each curve $J_i$ is the image of $\T$ under an $\eta$-quasisymmetric map $f_i,\ i\in I$. 
In a $n$-manifold $\mathcal M$, a collection $\{J_i\}_{i\in I}$ of Jordan curves consists of uniform quasicircles if and only if~\eqref{E:BT} holds for all $J_i,\ i\in I$, with the same $C>0$. 

Let $\mathcal M$ be a Riemann surface endowed with metric of constant curvature $1, 0$, or $-1$. A \emph{quasidisk} $D$ in $\mathcal M$ is an open topological disk 
with boundary $J=\dee D$ being a quasicircle. 
A collection of domains $\{D_i\}_{i\in I}$ in $\mathcal M$ is said to consist of \emph{uniform quasidisks} if each $D_i,\ i\in I$, is a quasidisk and the boundaries $\{J_i\}_{i\in I},\ J_i=\partial D_i,\ i\in I$, are uniform quasicircles. A collection of continua (i.e., compact connected sets that consist of more than one point) $\{K_i\}_{i\in I}$ in $\mathcal M$ is said to be \emph{uniformly relatively separated} if there exists $\delta>0$ such that
$$
\Delta(K_i,K_j)=\frac{{\rm dist}(K_i,K_j)}{\min\{{\rm diam}(K_i),{\rm diam}(K_j)\}}\ge\delta,\quad {\rm for\ all}\ i, j,\ i\neq j.
$$
Here, the distance ${\rm dist}(K_i,K_j)=\inf\{d_{\mathcal M}(p,q)\: p\in K_i, q\in K_j\}$.
Quasisymmetric maps clearly take collections of uniform quasidisks to collections of uniform quasidisks, quantitatively. Also, quasisymmetric maps quantitatively preserve the uniform relative separation property, since, if $f$ is an $\eta$-quasisymmetric map, then
$$
\Delta(f(A), f(B))\le\eta\left(2\Delta(A,B)\right),
$$
where $A$ and $B$ are disjoint continua; see, e.g., \cite{He01}.

\section{Transboundary modulus}\label{S:TMS}

\noindent
The  following notion of modulus is inspired by O.~Schramm's transboundary extremal length, introduced in~\cite{Sch95}.

We fix $n\in\N,\ n>1$, and let $\mathcal M$ be a Riemannian $n$-manifold,  and $\sigma$ the Riemannian volume measure on $\mathcal M$. 
Recall, if $\Gamma$ is a path (or curve) family in $\mathcal M$, the $n$-\emph{modulus} of $\Gamma$ is defined as
$$
{\rm mod}_n(\Gamma)=\inf\left\{\int_{\mathcal M} \rho^n d\sigma\right\},
$$
where the infimum is taken over all non-negative measurable functions $\rho$ on $\mathcal M$, called \emph{mass densities}, that are \emph{admissible} for $\Gamma$, i.e.,
$$
\int_\gamma\rho\, ds\ge1,\quad {\rm for\ all}\ \gamma\in\Gamma,
$$
where $ds$ denotes the arc-length element. More generally, the notion of modulus for a curve family can be defined on arbitrary metric measure spaces.

Let $\Lambda\subset \mathcal M$ be a compact set with $\mathcal M\setminus\Lambda=\cup_{i\in I}D_i$, where $\{D_i\}_{i\in I}$ is a non-empty collection of disjoint complementary components of $\Lambda$ in $\mathcal M$.
If
$\Gamma$ is  a family of paths in
$\mathcal M$, then we  define  the \emph{transboundary modulus} of $\Gamma$ with respect to $\Lambda$, denoted by ${\rm mod}_{\Lambda}(\Gamma)$,  as follows. Let  $\rho$ be a \emph{mass distribution}, i.e., a function $\rho\colon\Lambda\cup\{D_i\}_{i\in I}\to[0,+\infty]$, where 
the restriction $\rho|_{\Lambda}$ is a measurable non-negative function, and $\rho(D_i)$ is a non-negative number for each $i\in I$. We refer to $\rho|_{\Lambda}$ as the \emph{continuous part} of a mass distribution $\rho$ and to $\{\rho(D_i)\}_{i\in I}$ as the \emph{discrete part}.
A mass distribution $\rho$ is \emph{admissible} for
${\rm mod}_\Lambda(\Gamma)$ if  
there exists a family $\Gamma_0\subseteq \Gamma$ of $n$-modulus zero, such that   
\begin{equation}\label{E:Adm}
\int_{\gamma\cap\Lambda}\rho\, ds+\sum_{\gamma\cap D_i\neq\emptyset}\rho(D_i)\ge1,
\end{equation}
for  every path
$\gamma\in\Gamma\setminus \Gamma_0$. We call $\Gamma_0$ an  {\em exceptional  family} for $\rho$. Now we set 
\begin{equation}\label{E:Modulus}
{\rm mod}_{\Lambda}(\Gamma)=\inf\left\{{\rm mass}(\rho)\right\},
\end{equation}
where the infimum is taken over all mass distributions $\rho$ that 
are admissible for ${\rm mod}_{\Lambda}(\Gamma)$, and 
\begin{equation}\label{E:Mass}
{\rm mass}(\rho)=\int_{\Lambda} \rho^n d\sigma+\sum_{i\in I}\rho(D_i)^n
\end{equation}
is the \emph{total mass} of $\rho$.
Excluding an exceptional curve family $\Ga_0$  ensures that in many interesting cases
and for some relevant  path families $\Ga$
an admissible mass distribution exists and  $0<{\rm mod}_\Lambda(\Gamma)<\infty$. We think of a mass distribution $\rho$ as an element of the direct sum Banach space $L^n\oplus l^n$, where $L^n=L^n(\Lambda)$, and $l^n$ consists of all sequences $(\rho_i)_{i\in I}$ with $\sum_{i}|\rho_i|^n<\infty$. The norm of $\rho\in L^n\oplus l^n$ is given by
$$
\|\rho\|_{L^n\oplus l^n}=\left(\int_{\Lambda}|\rho|^n d\sigma+\sum_{i\in I}|\rho(D_i)|^n\right)^{1/n}.
$$
If the set $\Lambda$ has measure zero, then we ignore the continuous part of distribution $\rho$ in~\eqref{E:Adm} and~\eqref{E:Mass}. 

Transboundary modulus has some familiar properties of an $n$-mo\-du\-lus, with almost identical proofs, e.g., monotonicity and subadditivity. Na\-me\-ly, \emph{monotonicity} means that if $\Gamma'\subseteq\Gamma$, then
$$
{\rm mod}(\Gamma')\le{\rm mod}_\Lambda(\Gamma).
$$
\emph{Subadditivity} means that if $\Gamma=\cup_{k\in\N}\Gamma_k$, then
$$
{\rm mod}_\Lambda(\Gamma)\le\sum_{k\in\N}{\rm mod}(\Gamma_k).
$$    

The following proposition is the main result of this section.
 \begin{proposition}\label{P:Extremaldistr}
 If $\Gamma$ is a curve family with ${\rm mod_\Lambda}(\Gamma)<\infty$, and if
 $\{D_i\}_{i\in I}$ have bounded geometry, then an extremal distribution $\rho$, i.e., an admissible distribution for which the infimum in the definition~\eqref{E:Modulus} is achieved, exists and is unique as an element of $L^n\oplus l^n$.
\end{proposition}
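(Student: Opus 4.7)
The plan is to apply the direct method of the calculus of variations in the reflexive, uniformly convex Banach space $X = L^n(\Lambda, \sigma)\oplus l^n(I)$, noting that $\mathrm{mass}(\rho) = \|\rho\|_X^n$. First I will check that the admissible set $A$ is convex: if $\rho_1, \rho_2 \in A$ with exceptional families $\Ga_0^1, \Ga_0^2$ of $n$-modulus zero, then for $\lambda \in [0,1]$ the combination $\lambda\rho_1 + (1-\lambda)\rho_2$ is admissible with exceptional family $\Ga_0^1 \cup \Ga_0^2$, still of $n$-modulus zero by subadditivity. Take a minimizing sequence $(\rho_k) \subset A$ with $\mathrm{mass}(\rho_k) \to \mathrm{mod}_\Lambda(\Gamma)$; being bounded in $X$, after passing to a subsequence it converges weakly to some $\rho \in X$. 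Mazur's lemma then furnishes convex combinations $\tilde\rho_k = \sum_{j\ge k}a_{k,j}\rho_j$ converging to $\rho$ in norm, each still in $A$.

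The heart of the argument is showing $\rho \in A$, which requires Fuglede-type control separately on the continuous and discrete parts. For the continuous part the classical Fuglede lemma in $L^n$ yields, after a further subsequence, an exceptional family $\Ga_0^{\mathrm{cont}}$ of $n$-modulus zero such that $\int_{\ga\cap\Lambda}\tilde\rho_k\, ds \to \int_{\ga\cap\Lambda}\rho\, ds$ for all $\ga \notin \Ga_0^{\mathrm{cont}}$. For the discrete part I will invoke bounded geometry through the auxiliary $L^n$ density
\[
\phi_k \;=\; \sum_{i\in I}\frac{|\tilde\rho_k(D_i)-\rho(D_i)|}{r_i}\,\chi_{B(p_i, R_i)}.
\]
Since $B(p_i, r_i)\subset D_i$ are pairwise disjoint and $R_i\le Cr_i$, the enlarged balls $B(p_i,R_i)$ have uniformly bounded overlap on $\mathcal M$ (depending only on $n, C$, and local volume doubling), so by Hölder and $\sigma(B(p_i,R_i))\lesssim r_i^n$ one gets $\|\phi_k\|_{L^n}^n \lesssim \|\tilde\rho_k-\rho\|_{l^n}^n\to0$. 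A second application of Fuglede produces an exceptional family $\Ga_0^{\mathrm{disc}}$ off which $\int_\ga \phi_k\, ds \to 0$. For every $\ga$ whose crossings of the boundaries $\dee D_i$ are transversal, bounded geometry gives $\mathrm{length}(\ga\cap B(p_i, R_i)) \gtrsim r_i$ whenever $\ga\cap D_i\neq\emptyset$, hence $\int_\ga\phi_k\, ds \gtrsim \sum_{\ga\cap D_i\neq\emptyset}|\tilde\rho_k(D_i)-\rho(D_i)|$, and the discrete sum in \eqref{E:Adm} passes to the limit. Curves failing this transversality condition, i.e., touching some $\dee D_i$ tangentially, form an $n$-modulus zero family by a standard thin-annular-neighborhood argument and are absorbed into the final exceptional family $\Ga_0 = \Ga_0^{\mathrm{cont}} \cup \Ga_0^{\mathrm{disc}} \cup \Ga_0^{\mathrm{tan}}$.

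Weak lower semicontinuity of the $X$-norm gives $\mathrm{mass}(\rho) \le \liminf_k\mathrm{mass}(\rho_k) = \mathrm{mod}_\Lambda(\Gamma)$, so the limit $\rho$ is extremal. For uniqueness I will use strict convexity of $\|\cdot\|_X$, a consequence of uniform convexity of $L^n\oplus l^n$ for $n>1$: if $\rho \neq \rho'$ were two extremals, their midpoint $(\rho+\rho')/2$ would be admissible with $\mathrm{mass}((\rho+\rho')/2) < \mathrm{mod}_\Lambda(\Gamma)$, contradicting minimality.

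The main obstacle I expect is the discrete-part Fuglede step: both the construction of $\phi_k$ so as to dominate the discrete sum along generic curves, and the verification that tangential crossings of the boundaries $\dee D_i$ form an $n$-modulus zero family. This is precisely where the bounded geometry hypothesis \eqref{E:Bddgeom} is indispensable, since without comparability of $r_i$ and $R_i$ one cannot convert the $l^n$ control of the discrete masses into an $L^n$ density that integrates meaningfully along curves.
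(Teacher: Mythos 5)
Your overall strategy (direct method in the uniformly convex space $L^n\oplus l^n$, Mazur's lemma, a Fuglede-type argument to pass admissibility to the limit, strict convexity for uniqueness) is the same as the paper's. The gap is in the discrete-part Fuglede step, and it is a real one. The lower bound $\length(\ga\cap B(p_i,R_i))\gtrsim r_i$ for every curve with $\ga\cap D_i\neq\emptyset$ is false: since $D_i$ may come arbitrarily close to $\dee B(p_i,R_i)$, a curve can enter $D_i$ transversally near that boundary and leave $B(p_i,R_i)$ immediately, spending arbitrarily little length in $B(p_i,R_i)$. Such curves are not tangential to $\dee D_i$, so they are not absorbed by your $\Ga_0^{\mathrm{tan}}$, and they certainly do not form an $n$-modulus zero family (every curve meeting $D_i$ near its ``outermost'' points is of this type). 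The paper's fix is to use the \emph{doubled} balls $B(p_i,2R_i)$ with density $\rho(D_i)/R_i$ and to first decompose $\Gamma=\cup_k\Gamma_k$ with $\Gamma_k=\{\ga\colon \diam(\ga)\ge 1/k\}$: for fixed $k$ and all but finitely many $i$ one has $4R_i<1/k$, so any $\ga\in\Gamma_k$ meeting $D_i\subseteq B(p_i,R_i)$ must exit $B(p_i,2R_i)$ and hence traverse the annulus $B(p_i,2R_i)\setminus B(p_i,R_i)$, giving $\int_\ga\chi_{B(p_i,2R_i)}\,ds\ge R_i$ with no transversality hypothesis. Subadditivity of $\Mod_n$ over $k$ then finishes the reduction (this is the content of Lemma~\ref{L:Modcomp}).

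A secondary error: the balls $B(p_i,R_i)$ (or their doubles) need \emph{not} have uniformly bounded overlap under the bounded geometry hypothesis, since domains at infinitely many different scales can all contribute at a single point; so your justification of $\|\phi_k\|_{L^n}^n\lesssim\|\tilde\rho_k-\rho\|_{l^n}^n$ does not stand as written. The inequality itself is true, but it requires the Bojarski-type estimate $\|\sum_i a_i\chi_{\lambda B_i}\|_{L^n}\le C\|\sum_i a_i\chi_{B_i}\|_{L^n}$ for disjoint balls $B_i$, proved by duality against $L^p$ and the boundedness of the uncentered maximal function (Lemmas~\ref{L:Lpbound} and~\ref{L:Boj} in the paper). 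With these two repairs your argument becomes essentially the paper's proof.
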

The proof of this proposition below mimics that of~\cite[Proposition~2.4]{BM13} and employs the uncentered maximal function on Riemannian manifolds. 

\subsection{Auxiliary lemmas}
Let $(\mathcal M, g)$ be a compact Riemannian $n$-manifold, $n>1$, with $\sigma$ denoting the Riemannian volume measure. 
Then $\sigma(\mathcal M)<\infty$.
 If $f\: \mathcal M\to\R$ is a measurable function in $L^1(\mathcal M)$, we define the \emph{uncentered maximal function} $M_f$ on $\mathcal M$ by
 $$
 M_f(x)=\sup_{x\in B}\left\{\frac{1}{\sigma(B)}\int_B|f|d\sigma\right\},
 $$ 
 where the supremum is taken over all balls $B$ in $\mathcal M$ that contain $x$, and not only the ones centered at $x$. Note that since $\mathcal M$ is assumed to be compact, we may restrict the radii of the balls $B$ to be at most ${\rm diam}(\mathcal M)$.
 If the balls in the definition of $M_f$ are centered at $x$, the corresponding maximal function is referred to as \emph{centered}. 

To prove Proposition~\ref{P:Extremaldistr} we need Lem\-ma~\ref{L:Modcomp} below (cf.~\cite[Lemma~2.3]{BM13}),  which, in turn, requires the following  lemmas.
 
 \begin{lemma}\cite[Theorem~2.2]{He01}\label{L:Lpbound}
 If $\mathcal M$ is a compact Riemannian manifold, then for each $p,\ 1<p<\infty$, there exists $C>0$ such that
 \begin{equation}\label{E:Lpbound}
 \left\|M_f\right\|_p\leq C\,\left\|f\right\|_p,\quad {\rm for\ all}\ f\in L^p(\mathcal M).
 \end{equation}
 \end{lemma}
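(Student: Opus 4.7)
The plan is to establish \eqref{E:Lpbound} by the classical two-step route: combine the trivial $L^\infty$-bound $\|M_f\|_\infty \le \|f\|_\infty$ with a weak-type $(1,1)$ inequality, and then invoke the Marcinkiewicz interpolation theorem. For the weak-type estimate, my strategy is to reduce to the \emph{centered} maximal function $M^c_f$, whose weak-type $(1,1)$ boundedness is already supplied by the cited \cite[Theorem~2.2]{He01} in the setting of doubling metric measure spaces.

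First I would verify that $(\mathcal M, g)$, being a compact Riemannian manifold, satisfies the doubling property: there exists $D > 0$ such that $\sigma(B(x, 2r)) \le D\, \sigma(B(x, r))$ for every $x \in \mathcal M$ and every $r > 0$. On a finite cover of $\mathcal M$ by normal coordinate charts, $g$ is uniformly bi-Lipschitz equivalent to the Euclidean metric and $\sigma$ is uniformly comparable to Lebesgue measure, which handles sufficiently small radii via Euclidean doubling; for radii bounded below by a fixed positive constant, compactness yields a uniform positive lower bound on $\sigma(B(x, r))$ while $\sigma(B(x, 2r)) \le \sigma(\mathcal M) < \infty$, so the ratio is uniformly bounded.

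Next I compare $M_f$ with $M^c_f$ pointwise. If $x \in B = B(y, r)$, then $B \subseteq B(x, 2r) \subseteq B(y, 3r)$, and iterating the doubling property gives $\sigma(B(x, 2r)) \le C_1 \sigma(B(y, r))$ for a constant $C_1$ depending only on $D$. Consequently
$$
\frac{1}{\sigma(B)} \int_B |f|\,d\sigma \;\le\; \frac{C_1}{\sigma(B(x, 2r))} \int_{B(x, 2r)} |f|\,d\sigma \;\le\; C_1\, M^c_f(x).
$$
Taking the supremum over all balls $B \ni x$ yields the pointwise bound $M_f(x) \le C_1\, M^c_f(x)$ on $\mathcal M$. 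Then \cite[Theorem~2.2]{He01} delivers weak-type $(1,1)$ for $M^c_f$, hence for $M_f$ up to the factor $C_1$, and Marcinkiewicz interpolation with the $L^\infty$-bound produces \eqref{E:Lpbound} for every $p \in (1, \infty)$.

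The only step requiring any genuine care is the verification of the doubling constant on the compact manifold, which is routine given a finite atlas. The remainder is a purely formal use of interpolation together with the pointwise comparison between the uncentered and centered maximal functions, with no new ideas needed beyond the standard theory for doubling metric measure spaces.
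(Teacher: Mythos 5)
Your proof is correct, but it takes a genuinely different route from the paper's. The paper proves the weak-type $(1,1)$ inequality \emph{directly for the uncentered} maximal function: it covers the superlevel set $\{M_f>\lambda\}$ by balls realizing the large averages, extracts a disjoint subfamily via Vitali's $5r$-covering lemma, and uses the doubling property only in the form $\sigma(5B_i)\le C\,\sigma(B_i)$; it then carries out the passage from weak $(1,1)$ to strong $(p,p)$ by hand, truncating $f$ at height $\lambda/2$ and integrating with Cavalieri's principle rather than citing Marcinkiewicz. You instead reduce the uncentered operator to the centered one by the pointwise comparison $M_f\le C_1 M^c_f$ (your chain $B\subseteq B(x,2r)\subseteq B(y,3r)$ together with two applications of doubling is sound), outsource the weak-type bound to \cite[Theorem~2.2]{He01}, and invoke interpolation. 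Your route is shorter and leans on the reference that the paper itself acknowledges covers the centered case; its one extra obligation is the explicit verification that a compact Riemannian manifold is uniformly doubling, which you discharge correctly via a finite atlas of bi-Lipschitz charts for small radii and compactness plus finiteness of $\sigma(\mathcal M)$ for large radii. The paper's argument needs the same doubling fact (it asserts $\sigma(5B_i)\le C\,\sigma(B_i)$ without proof) but buys self-containedness: no comparison of the two maximal operators and no appeal to the Marcinkiewicz theorem as a black box. Both proofs are complete and yield the same conclusion.
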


\begin{lemma}\cite{Bo88}, \cite[Lemma~2.2]{BM13}\label{L:Boj}
Let $\{B_i\}_{i\in I}$ be a finite or countable collection of disjoint geometric balls in a compact Rieman\-nian $n$-manifold $\mathcal M$. Further,  let $\{a_i\}_{i\in I}$ be non-negative numbers. Then, for each $\lambda\ge1$ there exists $C>0$ such that
\begin{equation}\label{E:Boj}
\left\|\sum_{i\in I}a_i\chi_{\lambda B_i}\right\|_{L^n}\leq C \left\|\sum_{i\in I}a_i\chi_{B_i}\right\|_{L^n}.
\end{equation}
\end{lemma}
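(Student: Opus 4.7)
The plan is to deduce inequality~\eqref{E:Boj} by duality against the $L^{n'}$-norm and then invoke the boundedness of the uncentered maximal function from Lemma~\ref{L:Lpbound}. Write $n'=n/(n-1)$, $f=\sum_{i\in I}a_i\chi_{B_i}$, and $g=\sum_{i\in I}a_i\chi_{\lambda B_i}$. By duality it suffices to show that $\int_{\mathcal M} g h\,d\sigma\le C\|f\|_{L^n}$ for every nonnegative $h\in L^{n'}(\mathcal M)$ with $\|h\|_{n'}\le 1$; taking the supremum over such $h$ then yields~\eqref{E:Boj}.

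Fix such an $h$. The key observation is that for each $i\in I$ and every $y\in B_i$, the ball $\lambda B_i$ contains $y$ (since $\lambda\ge 1$), so by the very definition of the uncentered maximal function
$$
\frac{1}{\sigma(\lambda B_i)}\int_{\lambda B_i} h\,d\sigma \le M_h(y).
$$
Averaging over $y\in B_i$ and then applying a volume-doubling estimate $\sigma(\lambda B_i)\le C\sigma(B_i)$ gives
$$
\int_{\lambda B_i} h\,d\sigma\le \frac{\sigma(\lambda B_i)}{\sigma(B_i)}\int_{B_i}M_h\,d\sigma\le C\int_{B_i}M_h\,d\sigma.
$$
Multiplying by $a_i$, summing, and using that the $B_i$ are pairwise disjoint,
$$
\int_{\mathcal M} g h\,d\sigma=\sum_{i\in I}a_i\int_{\lambda B_i} h\,d\sigma\le C\sum_{i\in I}a_i\int_{B_i}M_h\,d\sigma=C\int_{\mathcal M} f\,M_h\,d\sigma.
$$
H\"older's inequality together with Lemma~\ref{L:Lpbound} applied at exponent $n'\in(1,\infty)$ now yields
$$
\int_{\mathcal M} f\,M_h\,d\sigma\le \|f\|_{L^n}\|M_h\|_{L^{n'}}\le C'\|f\|_{L^n}\|h\|_{L^{n'}}\le C'\|f\|_{L^n},
$$
which gives~\eqref{E:Boj} with a constant depending only on $n$, $\lambda$, and $\mathcal M$.

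The main point requiring care is the volume-doubling claim $\sigma(\lambda B_i)\le C\sigma(B_i)$ with $C=C(\mathcal M,\lambda,n)$, since $B_i$ is only assumed to have radius at most $\mathrm{diam}(\mathcal M)$, and compactness of $\mathcal M$ does not by itself provide a uniform doubling constant at all scales. To handle this I would split into cases: on scales bounded above by the injectivity radius, the Bishop--Gromov comparison (or simply the fact that $\sigma(B(x,r))$ is comparable to $r^n$ by the nonvanishing of the Jacobian of the exponential map on a compact $\mathcal M$) gives Euclidean-type doubling with constant $\le C\lambda^n$; on scales comparable to $\mathrm{diam}(\mathcal M)$, both $\sigma(B_i)$ and $\sigma(\lambda B_i)$ are comparable to $\sigma(\mathcal M)$, and in particular their ratio is bounded; a routine interpolation between the two regimes, using that the radii are bounded above by $\mathrm{diam}(\mathcal M)$, yields a single constant $C$ valid for all $B_i$ in the collection. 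With this uniform doubling in hand, the duality argument above completes the proof.
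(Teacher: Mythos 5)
Your proof is correct and follows essentially the same route as the paper: a duality argument testing against $\varphi\in L^{p}$ with $1/p+1/n=1$, the pointwise bound $\sigma(\lambda B_i)^{-1}\int_{\lambda B_i}h\,d\sigma\le M_h(y)$ for $y\in B_i$ combined with volume doubling, and then H\"older together with Lemma~\ref{L:Lpbound}. The only difference is that you spell out the uniform doubling constant $\sigma(\lambda B_i)\le C\sigma(B_i)$, which the paper absorbs into the factor $c\lambda^n$ without comment.
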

\begin{proof}
To prove this inequality, we let $\varphi\in L^p(\mathcal M)$ be arbitrary, where $1/p+1/n=1$. Then, if $M_\varphi$ is the uncentered maximal function of $\varphi$, we have $c>0$ depending on $\mathcal M$ with 
$$
\begin{aligned}
&\left|\int_{\mathcal M}\left(\sum_{i\in I}a_i\chi_{\lambda B_i}\right)\varphi\, d\sigma\right|=\left|\sum_{i\in I}a_i\int_{\lambda B_i}\varphi\, d\sigma\right|
\leq c\lambda^n\sum_{i\in I}a_i\int_{B_i}M_\varphi d\sigma\\
&=c\lambda^n\int_{\mathcal M}\left(\sum_{i\in I} a_i\chi_{B_i}\right)M_{\varphi} d\sigma\leq c\lambda^n\left\|\sum_{i\in I}a_i\chi_{B_i}\right\|_{L^n}\left\|M_\varphi\right\|_{L^p}.
\end{aligned}
$$
Lemma~\ref{L:Lpbound} along with the duality of $L^n$ and $L^p$ now gives the desired conclusion.
\end{proof}

\begin{lemma}\label{L:Modcomp}
If domains $D_i,\ i\in I$, in a Riemannian $n$-manifold $\mathcal M$ have bounded geometry and $\Gamma$  is an arbitrary curve family in  $\mathcal M$, then ${\rm mod}_{\Lambda}(\Gamma)=0$ implies ${\rm mod}_{n}(\Gamma)=0$.
\end{lemma}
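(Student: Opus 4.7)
The goal is to construct, for any $\epsilon > 0$, a classical $L^n$-density $\tilde\rho$ on $\mathcal M$ admissible for $\Gamma$ (up to an $n$-modulus-zero family) with $\|\tilde\rho\|_n^n < \epsilon$. Using $\operatorname{mod}_\Lambda(\Gamma) = 0$, I first choose an admissible mass distribution $\rho$ for the transboundary modulus with $\operatorname{mass}(\rho) < \delta$, for a small $\delta > 0$ to be fixed later, together with its exceptional family $\Gamma_0$ satisfying $\operatorname{mod}_n(\Gamma_0) = 0$. A preliminary observation is that for $\delta < 1$, no $\gamma \in \Gamma \setminus \Gamma_0$ can lie in a single $D_i$: otherwise the transboundary admissibility inequality forces $\rho(D_i) \geq 1$, contradicting $\operatorname{mass}(\rho) < 1$.

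The main construction is to spread each discrete mass $\rho(D_i)$ uniformly over a controlled enlargement of $D_i$:
$$\tilde\rho(x) := \rho(x)\, \chi_\Lambda(x) + \sum_{i \in I} \frac{\rho(D_i)}{R_i}\, \chi_{B(p_i, 2R_i)}(x).$$
To verify admissibility for $\gamma \in \Gamma \setminus \Gamma_0$, I would examine each $i$ with $\gamma \cap D_i \neq \emptyset$ individually. Since $\gamma$ is not contained in $D_i$, it meets both $D_i \subseteq B(p_i, R_i)$ and its complement; if $\gamma$ additionally exits $B(p_i, 2R_i)$, then it accumulates length at least $R_i$ inside $B(p_i, 2R_i)$, producing a contribution of at least $\rho(D_i)$ to $\int_\gamma \tilde\rho\, ds$. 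Summed with the continuous part $\int_{\gamma \cap \Lambda} \rho\, ds$, this recovers the transboundary admissibility bound, so $\int_\gamma \tilde\rho\, ds \geq 1$.

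For the norm estimate, I would use that $\{B(p_i, r_i)\}$ is a disjoint family (lying inside the disjoint $D_i$) and that $B(p_i, 2R_i)$ is a dilation of $B(p_i, r_i)$ by factor at most $2C$ by the bounded-geometry hypothesis $R_i/r_i \leq C$. Lemma~\ref{L:Boj} then yields
$$\Bigl\| \sum_i \tfrac{\rho(D_i)}{R_i}\, \chi_{B(p_i, 2R_i)} \Bigr\|_n \leq C' \Bigl\| \sum_i \tfrac{\rho(D_i)}{R_i}\, \chi_{B(p_i, r_i)} \Bigr\|_n,$$
and by disjointness the right-hand side, up to a dimensional constant and the bound $r_i/R_i \leq 1$, is controlled by $\bigl(\sum_i \rho(D_i)^n\bigr)^{1/n} \leq \operatorname{mass}(\rho)^{1/n}$. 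Combined with the trivial bound on the continuous part, this gives $\|\tilde\rho\|_n^n \leq C''\delta$, and choosing $\delta = \epsilon/C''$ finishes the main estimate.

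The main obstacle is the \emph{trapped case}: curves $\gamma \in \Gamma \setminus \Gamma_0$ with $\gamma \subseteq B(p_i, 2R_i)$ for some $i$ with $\gamma \cap D_i \neq \emptyset$, for which the length-based argument may not recover the contribution $\rho(D_i)$. I expect to handle this by showing that the subfamily of such trapped curves can be absorbed into a larger exceptional family of $n$-modulus zero—either directly, by observing that a trapped $\gamma$ with negligible arc length in $\Lambda$ would force $\rho(D_i) \geq 1 - o(1)$, contradicting $\operatorname{mass}(\rho) < \delta$, or by adding a small auxiliary density supported near each $\partial D_i$ whose $L^n$-norm is controlled via Lemma~\ref{L:Boj} and bounded geometry. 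Once the trapped curves are accounted for, the density $\tilde\rho$ (together with a corrective term absorbing the enlarged exceptional family) is admissible for $\Gamma$ up to a family of $n$-modulus zero, yielding $\operatorname{mod}_n(\Gamma) = 0$.
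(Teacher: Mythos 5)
Your core construction is exactly the paper's: spread each discrete mass as $\rho(D_i)/R_i$ over $B(p_i,2R_i)$, recover the contribution $\rho(D_i)$ from the crossing of the annulus $B(p_i,2R_i)\setminus B(p_i,R_i)$, and control the $L^n$-norm via Lemma~\ref{L:Boj} together with the disjointness of the balls $B(p_i,r_i)$ and the bound $R_i/r_i\le C$. The gap is precisely the ``trapped case'' you flag at the end, and neither of your proposed fixes works. Fix (a) fails because the transboundary admissibility inequality for a trapped $\gamma$ involves the \emph{sum} of $\rho(D_j)$ over \emph{all} components $D_j$ met by $\gamma$ plus $\int_{\gamma\cap\Lambda}\rho\,ds$; a short curve trapped in $B(p_i,2R_i)$ can satisfy it by meeting many tiny components $D_j$, each with $\rho(D_j)<\operatorname{mass}(\rho)^{1/n}$, so nothing forces $\rho(D_i)\ge 1-o(1)$. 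Fix (b) cannot work either: the trapped curves include curves of arbitrarily small diameter touching $\partial D_i$, a family of infinite $n$-modulus that no fixed auxiliary $L^n$-density can make long and that cannot be declared exceptional. More quantitatively, for a given $\gamma$ the ``lost'' components are those it meets with $R_j\gtrsim\operatorname{diam}(\gamma)$; their number is finite but blows up as $\operatorname{diam}(\gamma)\to0$, so the total loss is not uniformly controllable by taking $\delta$ small.

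The missing idea is the preliminary decomposition $\Gamma=\cup_{k}\Gamma_k$ with $\Gamma_k=\{\gamma\in\Gamma:\operatorname{diam}(\gamma)\ge 1/k\}$ (legitimate since $\Gamma$ contains no constant curves), combined with subadditivity of $\operatorname{mod}_n$. Within $\Gamma_k$ only finitely many components $D_i$ (those with $R_i$ comparable to or larger than $1/k$, finitely many by disjointness of the $B(p_i,r_i)$) can trap a curve, \emph{uniformly} over $\gamma\in\Gamma_k$. The paper then avoids your quantitative bookkeeping altogether by summing distributions $\rho_{k,l}$ with $\operatorname{mass}(\rho_{k,l})<2^{-l}$ into a single $\rho_k$ of finite mass whose transboundary length is \emph{infinite} on every non-exceptional curve of $\Gamma_k$; losing finitely many finite terms still leaves an infinite line integral, and infinite length with finite mass gives $\operatorname{mod}_n(\Gamma_k\setminus\Gamma_{k,\infty})=0$. (With the diameter stratification in hand, your ``admissible with small norm'' variant could also be pushed through by absorbing the at most $N(k)$ lost terms, each less than $\delta^{1/n}$, into a factor of $2$; but without the stratification the argument does not close.)
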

\begin{proof}
Since $\Gamma$ cannot contain constant curves,
we have
$$
\Gamma=\cup_{k\in\N}\Gamma_k,\quad{\rm where}\quad \Gamma_k=\{\gamma\in\Gamma:\ {\rm diam}(\gamma)\ge1/k\}.
$$
From subadditivity of ${\rm mod}_n$ it follows that it is enough to show that ${\rm mod}_n(\Gamma_k)=0$ for each $k\in\N$. We fix $k\in\N$. Monotonicity of ${\rm mod}_\Lambda$ gives that  ${\rm mod}_\Lambda(\Gamma_k)=0$. Thus, for each $l\in\N$,  there exists a mass distribution $\rho_{k,l}$ with ${\rm mass}(\rho_{k,l})<1/2^l$, and an exceptional curve family $\Gamma_{k,l}$ such that
$$
\int_{\gamma\cap\Lambda}\rho_{k,l}\, ds+\sum_{i\:\gamma\cap D_i\neq\emptyset}\rho_{k,l}(D_i)\ge1,
$$ 
for  all $\gamma\in\Gamma_k\setminus\Gamma_{k,l}$.

Let $\rho_k=\sum_{l\in\N}\rho_{k,l}$ and $\Gamma_{k,\infty}=\cup_{l\in\N}\Gamma_{k,l}$. Then, ${\rm mass}(\rho_k)<\infty$ and subadditivity of ${\rm mod}_n$ gives ${\rm mod}_n(\Gamma_{k,\infty})=0$. Moreover,
\begin{equation}\label{E:inflength}
\int_{\gamma\cap\Lambda}\rho_{k}\, ds+\sum_{i\:\gamma\cap D_i\neq\emptyset}\rho_{k}(D_i)=\infty
\end{equation}
for all $\gamma\in\Gamma_k\setminus\Gamma_{k,\infty}$.

Recall, the bounded geometry assumption gives the existence of balls $B(p_i, r_i), B(p_i, R_i)$ such that 
$$
B(p_i, r_i)\subseteq D_i\subseteq B(p_i, R_i),\quad {\rm with}\quad {R_i}/{r_i}\leq C.
$$
Now we define a mass density function $\rho$ on $\mathcal M$  by the formula
$$
\rho=\rho_k+\sum_{i\in I}\frac{\rho_k(D_i)}{R_i}\chi_{B(p_i,2R_i)},
$$
where the first summand is extended  to the complement of $\Lambda$ as 0.
We first claim that for each $\gamma\in\Gamma_k\setminus\Gamma_{k,\infty}$ one has
\begin{equation}\label{E:inflength2}
\int_\gamma\rho\, ds=\infty.
\end{equation}
Indeed, for all but finitely many $i$, since ${\rm diam}(\gamma)\ge 1/k$, if $\gamma$ intersects $D_i\subseteq B(p_i, R_i)$, it intersects both complementary components of $B(p_i,2R_i)\setminus B(p_i, R_i)$. This implies
$$
\int_\gamma\chi_{B(p_i,2R_i)}ds\ge R_i,
$$
and the claim follows from Equation~\eqref{E:inflength}. Next, from Lemma~\ref{L:Boj} we conclude
\begin{equation}\label{E:finitemass}
\begin{aligned}
\int_{\mathcal M}\rho^nd\sigma&\leq 2^{n-1}\left(\int_\Lambda \rho_k^n d\sigma+C^n\sum_{i\in I}\frac{\rho_k(D_i)^n}{R_i^n}\sigma\left(B(p_i, r_i)\right)\right)\\&\leq C(\mathcal M)\cdot{\rm mass}(\rho_k)<\infty,
\end{aligned}
\end{equation}
where $C(\mathcal M)$ is a constant that depends on the manifold $\mathcal M$. Then, \eqref{E:inflength2} and~\eqref{E:finitemass} imply ${\rm mod}_n(\Gamma_k\setminus\Gamma_{k,\infty})=0$. Hence,
$$
{\rm mod}_n(\Gamma_k)\leq{\rm mod}_n(\Gamma_{k,\infty})+{\rm mod}_n(\Gamma_k\setminus\Gamma_{k,\infty})=0.
$$
\end{proof}

\subsection{Proof of Proposition~\ref{P:Extremaldistr}} Let $(\rho_k)_{k\in\N}$ be a minimizing sequence of admissible mass distributions for ${\rm mod}_\Lambda(\Gamma)$, i.e.,
$$
{\rm mass}(\rho_k)\to{\rm mod}_\Lambda(\Gamma),\quad k\to\infty.
$$
The condition ${\rm mod}_\Lambda(\Gamma)<\infty$ allows us to assume that ${\rm mass}(\rho_k)\le C$ for some constant $C>0$ and all $k\in\N$. 
By the Banach--Alaoglu Theorem, there exists a subsequence of $(\rho_k)_{k\in\N}$ that weakly converges to a mass distribution $\rho\in L^n\oplus l^n$.
In particular, after possibly passing to a subsequence, we may assume that the limits
$$
\rho(D_i)=\lim_{k\to\infty}\rho_k(D_i)
$$
exist for all $i\in I$. We claim that the mass distribution $\rho$ is extremal. 

One inequality, namely ${\rm mass}(\rho)\leq{\rm mod}_\Lambda(\Gamma)$ follows from the weak lower semicontinuity of norms.
To show the reverse inequality one needs to demonstrate that $\rho$ is admissible for $\Gamma$.
Since the sequence $(\rho_k)_{k\in\N}$ converges weakly to $\rho\in L^n\oplus l^n$, Mazur's Lemma~\cite[Th.\ 2, p.\ 120]{Yo80} gives that there is a sequence $(\tilde\rho_K)_{K\in\N}$ of convex combinations,
$$
\tilde\rho_K=\sum_{k=1}^K\lambda_{K,k}\rho_k,
$$
that strongly converges to $\rho$ in $L^n\oplus l^n$. 
Therefore, $(\tilde\rho_K)_{K\in\N}$ is also a minimizing sequence for  ${\rm mod}_\Lambda(\Gamma)$. Moreover, each $\tilde\rho_K,\ K\in\N$, is trivially admissible for $\Gamma$. 
An exceptional curve family $\widetilde\Gamma_K$ for $\tilde\rho_K$ is the union of exceptional curve families for $\rho_k,\ k=1,2,\dots, K$.
 By possibly passing to a subsequence, we may assume that
 $$
 \left\|\tilde\rho_K-\rho\right\|_{L^n\oplus l^n}\le\frac{1}{2^K},\quad K\in\N. 
 $$
 Let 
 $$
 \Gamma_K'=\left\{\gamma\in\Gamma\: \int_{\gamma\cap \Lambda}|\tilde\rho_K-\rho|\, ds+\sum_{i\: \gamma\cap D_i\neq\emptyset}|\tilde\rho_K(D_i)-\rho(D_i)|\ge\frac{1}{K}\right\},
 $$
 and 
 $$
 \Gamma'=\cap_{k\in\N}\cup_{K\ge k}\Gamma'_K.
 $$
From this definition we have that if $\gamma\in\Gamma\setminus\Gamma'$, then
 $$
 \limsup_{K\to\infty}\left(\int_{\gamma\cap \Lambda}|\tilde\rho_K-\rho|\, ds+\sum_{i\: \gamma\cap D_i\neq\emptyset}|\tilde\rho_K(D_i)-\rho(D_i)|\right)=0.
 $$
 The mass distributions
 $$
 \rho_k'=\sum_{K=k}^\infty K|\tilde\rho_K-\rho|,\quad k\in\N,
 $$
 are admissible for $\Gamma'$, and since $\left\|\tilde\rho_K-\rho\right\|_{L^n\oplus l^n}\le{1}/{2^K}$, we have that ${\rm mass}(\rho_k')\to0,\ k\to\infty$. Thus ${\rm mod}_\Lambda(\Gamma')=0$. Lemma~\ref{L:Modcomp} then gives ${\rm mod}_n(\Gamma')=0$. We conclude that $\Gamma_\infty=\Gamma'\cup\left(\cup_{K\in\N}\widetilde\Gamma_K\right)$ is an exceptional family for $\rho$. Indeed, by subadditivity of ${\rm mod}_n$ it has modulus 0, and for each $\gamma\in\Gamma\setminus\Gamma_\infty$, since each $\tilde\rho_K$ is admissible,
 $$
 \begin{aligned}
 &\int_{\gamma\cap\Lambda}\rho\, ds+\sum_{i\:\gamma\cap D_i\neq\emptyset}\rho(D_i)\\
 &\ge1-\limsup_{K\to\infty}\left(\int_{\gamma\cap \Lambda}|\tilde\rho_K-\rho|\, ds+\sum_{i\: \gamma\cap C_i\neq\emptyset}|\tilde\rho_K(C_i)-\rho(C_i)|\right)=1.
 \end{aligned}
 $$
This completes the proof of existence of an extremal mass distribution for ${\rm mod}_\Lambda(\Gamma)$. 

 A simple convexity argument implies that an extremal distribution $\rho$ is also unique. Indeed, if there exists another, different, extremal mass distribution $\rho'$, then
 the average $(\rho+\rho')/2$ is also admissible for $\Gamma$. The strict convexity of $L^n\oplus l^n,\ 1<n<\infty$, now gives that the average has a strictly smaller total mass, a contradiction. 
\qed

To finish this section, we prove the following conformal invariance  lemma that makes transboundary modulus particularly useful. If $f$ is a homeomorphism of  $\mathcal M$ that is conformal along its minimal set $\Lambda=\mathcal M\setminus\cup_{i\in I}D_i$, and $\rho$ is a mass distribution on $\Lambda\cup\{D_i\}_{i\in I}$, we define the \emph{pullback distribution} $f^*(\rho)$ to be $\rho(f)\|f'\|$ on $\Lambda$ and $f^*(\rho)(D_i)=\rho(f(D_i)),\ i\in I$. 
\begin{lemma}\label{L:Confinv}
Let $f$ be quasiconformal homeomorphism of $\mathcal M$ that is conformal along its minimal set $\Lambda$.  Let $\Gamma$ be an arbitrary curve family in $\mathcal M$. If $\rho$ is the extremal mass distribution for $f(\Gamma)$, where $f(\Gamma)=\{f(\gamma)\: \gamma\in\Gamma\}$, then the pullback distribution $f^*(\rho)$ is the extremal mass distribution for $\Gamma$. 
Moreover, 
$$
{\rm mod}_\Lambda(f(\Gamma))={\rm mod}_\Lambda(\Gamma).
$$ 
\end{lemma}
\begin{proof}
To make the desired conclusion, we use the assumption that $f$ is quasiconformal. Indeed, let $\Gamma_0\subseteq f(\Gamma)$ be an exceptional curve family for $\rho$, and let $\Gamma_0'=f^{-1}(\Gamma_0)$ be the preimage family. Then we have ${\rm mod}_n(\Gamma_0')=0$ because  quasiconformal maps preserve vanishing of $n$-modulus. The latter is a simple consequence of the geometric definition of a quasiconformal map in $\R^n$; see~\cite{He01}. In addition, let $\Gamma_0''$ be the family of paths on which $f$ is not absolutely continuous. Again, since $f$ is quasiconformal, we have ${\rm mod}_n(\Gamma_0'')=0$, and thus ${\rm mod}_n(\Gamma_0'\cup \Gamma_0'')=0$. If $\gamma\in\Gamma\setminus(\Gamma_0'\cup\Gamma_0'')$, then, in particular, $f(\gamma)\in\Gamma\setminus\Gamma_0$, and hence
$$
\begin{aligned}
&\int_{\gamma\cap\Lambda}\rho(f)\|f'\|\, ds+ \sum_{\gamma\cap D_i\neq\emptyset}\rho(f(D_i))\\
&=\int_{f(\gamma)\cap\Lambda}\rho\, ds+\sum_{f(\gamma)\cap f(D_i)\neq\emptyset}\rho(f(D_i))\\
&=\int_{f(\gamma)\cap\Lambda}\rho\, ds+\sum_{f(\gamma)\cap D_i\neq\emptyset}\rho(D_i)\ge1,
\end{aligned}
$$ 
i.e., the pullback mass distribution $f^*(\rho)$ is admissible for $\Gamma$. The total mass of $f^*(\rho)$ is
$$
\int_\Lambda\rho(f)^n\|f'\|^nd\sigma+\sum_{i\in I}\rho(f(D_i))^n=\int_\Lambda\rho^nd\sigma+\sum_{i\in I}\rho(D_i)^n,
$$
i.e., it is equal to ${\rm mass}(\rho)$. Therefore, 
$$
{\rm mod}_\Lambda(\Gamma)\le{\rm mod}_\Lambda(f(\Gamma)).
$$
The converse inequality and indeed the full lemma follow from the facts that the inverse of a quasiconformal map is quasiconformal, and the inverse of a conformal map is conformal.  
\end{proof}

\section{Proof of Theorem~\ref{T:tori}}\label{S:Tori}

\noindent
Assume for contradiction that there exists a Denjoy type quasiconformal homeomorphism $f$ whose minimal set $\Lambda$ has measure zero and wandering domains have bounded geometry. 
Consider  $\Gamma_h$, a path family on $n$-torus that consists of all \emph{horizontal} paths.
I.e., if we write $\T^n=\R^n/\Z^n=(\R/\Z)\times(\R^{n-1}/\Z^{n-1})$, then these are simple closed curves isotopic to the circle 
$$
C_1=\{(x_1,0,\dots,0)\: x_1\in\R/\Z\}\subset\T^n.
$$ 
There is a natural \emph{orthogonal projection} $\pi_1$ of $\T^n$ onto $C_1$, namely, $\pi_1(x_1,x_2,\dots,x_n)=(x_1, 0,\dots, 0),\ x_i\in\R/\Z,\ i=1,2,\dots, n$.

The assumption of bounded geometry  implies the following lemma.
\begin{lemma}\label{L:mogafi}
If $f$ and $\Lambda$ are as above, for the horizontal path family $\Gamma_h$ one has ${\rm mod}_\Lambda(\Gamma_h)<\infty$.
\end{lemma}
\begin{proof}
Define a mass distribution $\rho$ to be 0 on $\Lambda$ and $\rho(D_i)={\rm diam}(D_i)$,  the diameter value of each complementary component $D_i,\ i\in I$, of $\Lambda$. Such $\rho$ is 
admissible for $\Gamma_h$. Indeed, let $\Gamma_0$ be the subfamily of $\Gamma_h$ whose curves spend positive or infinite length in $\Lambda$. Since $\Lambda$ is assumed to have measure zero, we have ${\rm mod}_n(\Gamma_0)=0$. Now, if $\gamma\in\Gamma_h\setminus\Gamma_0$, let $\{D_i\:\ i\in I_\gamma\}$ be the collection of all complementary components of $\Lambda$ in $\T^n$ that intersect $\gamma$. Since $\gamma$ spends zero length in $\Lambda$, the orthogonal projections $\pi_1(D_i),\ i\in I_\gamma$,   cover $C_1$ with a possible exception of a set of length 0. Therefore, $\sum_{i\in I_\gamma}{\rm diam}(D_i)\ge\sum_{i\in I_\gamma}{\rm diam}(\pi_1(D_i))\ge1$, i.e., $\rho$ is admissible. The assumption of bounded geometry for $D_i,\ i\in I$, implies that the total mass of $\rho$ is bounded above by $C(n)$, where $C(n)$ is a constant that depends on the dimension $n$ and the constant $C$  from~\eqref{E:Bddgeom}.
\end{proof}

Proposition~\ref{P:Extremaldistr} now implies that the unique extremal mass distribution $\rho_{h}$ for $\Gamma_h$ exists. Since $f$ is isotopic to the identity, the path family $f(\Gamma_h)=\{f(\gamma)\: \gamma\in\Gamma_h\}$ coincides with the horizontal path family $\Gamma_h$. In particular, 
$$
{\rm mod}_\Lambda(f(\Gamma_h))={\rm mod}_\Lambda(\Gamma_h).
$$
The last equality holds for any homeomorphism of $\T^n$ isotopic to the identity. Since $f$ is quasiconformal, Lemma~\ref{L:Confinv}  along with the uniqueness of the extremal mass distribution give more, namely that 
$$
\rho_h(f(D_i))=\rho_h(D_i),\ i\in I.
$$

 But  $D_i,\ i\in I$, are wandering domains, and thus we must have that $\rho(D_i)$ is a constant $C$ for each of infinitely many domains $D_i$ in the same orbit under the map $f$. However ${\rm mass}(\rho_{h})<\infty$, and thus $C=0$ for each such orbit, and therefore $\rho$ is identically zero. This is impossible because a mass distribution that is identically zero cannot be admissible. The contradiction finishes the proof.
\qed

\section{The Liouville Theorem and proof of Theorem~\ref{T:tori2}}\label{S:LT}

\noindent
The main ingredient in the proof of Theorem~\ref{T:tori2} is a version of~\cite[Theorem~2.3.1]{IM01} in the context of fractals.
Below, for a matrix $A$ we denote its transpose by $A^t$. Also, in what follows, by a \emph{M\"obius map} or \emph{transformation} in $\hC$ or $\R^n\cup\{\infty\}$ we mean a finite composition of reflections in spheres. Note that a M\"obius transformation may be orientation preserving or reversing. 

\begin{theorem}\label{T:Liouv}
Let $\Lambda$ be a closed subset of $\R^n$ that is connected and locally rectifiably connected. In addition, assume that the Lebesgue density points of $\Lambda$ 
are dense in $\Lambda$.   
Let $f\:\R^n\to\R^n$ be a $\mathcal C^3$-diffeomorphism such that $f$ preserves the standard conformal structure on $\Lambda$. 
Then $f$ is a M\"obius transformation on $\Lambda$. 
\end{theorem}  
\begin{proof}
The proof is
almost verbatim the proof as presented in~\cite{IM01}. Because of this, we do not provide all the details below and only highlight the main steps and differences. 

For $f$ to preserve the standard conformal structure on $\Lambda$ it means it
satisfies the Cauchy--Riemann system
$$
(Df(x))^t\cdot Df(x)=|\det Df(x)|^{2/n}\id,\quad {\rm at\ each}\  x\in\Lambda.
$$ 
To simplify notations, we consider a more general system 
\begin{equation}\label{E:Conf1}
(Df(x))^t\cdot Df(x)=G(x),
\end{equation}
where $G(x)=[g_{ij}(x)]$ is a positive definite symmetric $n\times n$ matrix with entries that are twice continuously differentiable. Let $G^{-1}(x)=[g^{ij}]$ denote the inverse of $G(x)$. We can rewrite Equation~\eqref{E:Conf1} as
\begin{equation}\label{E:Conf2}
Df(x)\cdot G^{-1}(x)\cdot (Df(x))^t=\id.
\end{equation}
Denoting the coordinate functions of $f$ by $f^1, f^2,\dots, f^n$ and the partials of $f^i$ by $f^i_j=\dee f^i/\dee x_j,\ i, j=1,2,\dots, n$, we can write Equations~\eqref{E:Conf1}, \eqref{E:Conf2} as
\begin{equation}\label{E:Conf3}
f^\alpha_i f^\alpha_j=g_{ij},\quad g^{\nu\mu}f^i_\nu f^j_\mu=\delta_{ij},
\end{equation}
respectively,
where we use Einstein's summation convention, i.e., we sum over repeated indices, and $\delta_{ij}$ denotes the Kronecker symbol. After differentiating the first system of equations in~\eqref{E:Conf3} with respect to $x_k$ and  manipulating algebraically, we obtain
\begin{equation}\label{E:Conf4}
f_{jk}=\Gamma^\nu_{jk}f_\nu,
\end{equation}
where $f^i_{jk}=\dee^2 f^i/(\dee x_j\dee x_k)$, and
$$
\Gamma^\nu_{jk}=\frac12 g^{i\nu}\left(\frac{\dee g_{ij}}{\dee x_k}+\frac{\dee g_{ik}}{\dee x_j}-\frac{\dee g_{jk}}{\dee x_i}\right)
$$ 
are the Christoffel symbols of $G$. 

A consequence of~\eqref{E:Conf4} is that if $G$ is constant on $\Lambda$, then $f$ is a linear transformation when restricted to  $\Lambda$. This observation would be trivial if $\Lambda$ were a domain. However, for $\Lambda$ as in the statement of Theorem~\ref{T:Liouv}  one needs to exercise some care. Indeed, first of all, the components $g_{ij},\ i,j=1,2,\dots, n$, of $G$ being constants on a closed set does not in general imply that all the partial derivatives  $\dee g_{ij}/\dee x_k$ are zero on that set. This is implied however at each Lebesgue density point of $\Lambda$. 
The assumption that such points are dense in $\Lambda$ along with the $\mathcal C^3$-regularity assumption give that all $\dee g_{ij}/\dee x_k$ are zero at each point of $\Lambda$. 
Hence all $f^i_{jk}$ are identically zero in $\Lambda$. 
The claim that the restriction of $f$ to $\Lambda$ is linear now follows from the Fundamental Theorem for Line Integrals because $\Lambda$ is assumed to be closed, connected, and locally rectifiably connected.

In particular, if $f$ satisfies the Cauchy--Riemann system in $\Lambda$ with constant Jacobian determinant, then, when restricted to $\Lambda$, the map $f$ has the form
\begin{equation}\label{E:Conf4.5}
f(x)=b+\alpha A x,
\end{equation}
where $b\in\R^n$, $\alpha$ is a positive constant, and $A$ is an orthogonal matrix.

Differentiating~\eqref{E:Conf4} with respect to $x_l$ and using~\eqref{E:Conf3}, one obtains
\begin{equation}\label{E:Conf5}
f_{jkl}=\left(\frac{\dee\Gamma^\nu_{jk}}{\dee x_l}+\Gamma^\mu_{jk}\Gamma^\nu_{\mu l}\right) f_\nu,
\end{equation}
where $f_{jkl}$ denotes $\dee^3 f/(\dee x_j\dee x_k\dee x_l)$. Permuting $k$ and $l$ and using the assumption that $f$ is in $\mathcal C^3$, we obtain
\begin{equation}\label{E:Conf6}
R^\nu_{jlk} f_\nu=0,
\end{equation}
where
$$
R^\nu_{jlk}=\frac{\dee\Gamma^\nu_{jk}}{\dee x_l}-\frac{\dee\Gamma^\nu_{jl}}{\dee x_k}+\Gamma^\mu_{jk}\Gamma^\nu_{\mu l}-\Gamma^\mu_{jl}\Gamma^\nu_{\mu k}
$$
is the Riemannian curvature tensor. The map $f$ being a diffeomorphism implies that the vectors $f_\nu,\ \nu=1,2\dots, n$, are linearly independent, and thus~\eqref{E:Conf6} gives
$$
R^\nu_{jlk}=0.
$$

The next claim is that for $f$ satisfying the Cauchy--Riemann system on $\Lambda$, either $J(x,f)$ is constant or 
$$
J(x,f)=r^{2n}\|x-a\|^{-2n},\quad x\in\Lambda,
$$
for some constant $r>0$ and a constant vector $a\in\R^n$. Indeed, suppose that $J(x,f)$ is non-constant. Since
$$
G(x)=\left(\det Df(x)\right)^{2/n}\id,\quad x\in\Lambda,
$$ 
we have that the Ricci curvature tensor $R_{ij}=R^\nu_{i\nu j}$ vanishes on $\Lambda$. For the function
$$
P(x)=J(x,f)^{-1/n},
$$
vanishing of the Ricci curvature tensor can be written on $\Lambda$ as
\begin{equation}\label{E:Conf7}
2P\, P_{ij}=\|\nabla P\|^2\delta_{ij},
\end{equation}
where, as above, the lower indices in $P_{ij}$ denote the corresponding partial derivatives. Differentiating~\eqref{E:Conf7} with respect to $x_k$ and ma\-ni\-pu\-la\-ting algebraically again one arrives at
$$
P_{ijk}(x)=0,\quad x\in\Lambda,
$$
for all indices $i,j,k=1,2,\dots, n$. This combined with the assumption that $\Lambda$ is closed, connected, and locally rectifiably connected  shows that the second partial derivatives $P_{ij}$ are constant in $\Lambda$, and, in view of~\eqref{E:Conf7}, one must have
$$
P_{ij}(x)=c_{ij}=c\, \delta_{ij},\quad x\in\Lambda,\ i,j=1,2,\dots, n,
$$
for some constant $c\ge0$. If $c=0$, Equation~\eqref{E:Conf7} gives that $P$, and hence the Jacobian determinant, are constant in $\Lambda$. Thus we may assume that $c>0$ and denote $c=2 r^{-2}$, where $r>0$. Integrating along rectifiable paths in $\Lambda$ one then has
$$
\nabla P=2r^{-2}(x-a),\quad x\in\Lambda,
$$
for some $a\in\R^n$, and hence
$$
J(x,f)=P(x)^{-n}=r^{2n}\|x-a\|^{-2n},\quad x\in\Lambda.
$$

If $J(x,f)$ is a positive constant, we know from the above that $f$ is a linear map of the form~\eqref{E:Conf4.5}, where $A$ is an orthogonal matrix. In the case
$$
J(x,f)=r^{2n}\|x-a\|^{-2n},
$$
if one writes $f=F\circ g$, where $g$ is the reflection in the sphere centered at $a$ with radius $r$, then a simple computation shows that $F$ is a solution to  the Cauchy--Riemann system in $g(\Lambda)$ and has a constant Jacobian there. Hence $F$ has the form~\eqref{E:Conf4.5}, and therefore 
$$
f(x)=b+\frac{\alpha A(x-a)}{\|x-a\|^2},\quad x\in\Lambda,
$$
a M\"obius transformation.
\end{proof}

\medskip
\noindent
We are now ready to finish the proof of Theorem~\ref{T:tori2}. Assume for contradiction that $f\in{\rm Diff}^2(\T^n)$ as in Theorem~\ref{T:tori2} exists. Let $\widetilde\Lambda$ denote the preimage of $\Lambda$ under the covering map from $\R^n$ to $\T^n$, and let $\tilde f$ denote a lift of the map $f$ under the covering map. It is immediate that the conditions of Theorem~\ref{T:Liouv} are satisfied for $\widetilde\Lambda$ and $\tilde f$. Therefore, the restriction of $\tilde f$ to $\widetilde\Lambda$  must be a M\"obius transformation. Since $\tilde f$ preserves the infinity, the map $\tilde f$ must be linear, i.e., 
$$
\tilde f(x)=b+\alpha A x,\quad x\in\Lambda.
$$

Compactness of $\T^n$ and connectedness assumptions of $\Lambda$ imply that the lifts of wandering domains of $f$ to $\R^n$ have uniformly bounded diameters.  One therefore must have $\alpha=1$, and hence $\tilde f$ is an isometry. In particular, $f$ cannot change the diameters of the wandering domains. This is impossible, i.e., no map $f$ as in Theorem~\ref{T:tori2} exists.
\qed

\medskip
\noindent
We finish this section with two comments. First, it should be possible to relax the $\mathcal C^3$-regularity assumption in Theorem~\ref{T:tori2} to Sobolev regularity $W_{\rm loc}^{1,n}$. In that case one would require the equation $(Df(x))^t\cdot Df(x)=|\det Df(x)|^{2/n}\id$ to be satisfied only almost everywhere in $\Lambda$. The proof of the main tool, namely the Liouville Theorem, is very involved though even in the case of a domain in $\R^n$. Second, the current proof does not yield itself to replacing the standard conformal structure by an arbitrary measurable bounded conformal structure because in dimension 3 and higher the Measurable Riemann Mapping Theorem is not available. In particular, it is not sufficient to only require that the wandering domains have bounded geometry. Indeed, under $\mathcal C^1$-regularity one can show that the quasiconformal dilatations of all iterates of $f$ are uniformly bounded; cf.~Lemma~\ref{L:UQC} below.   One can then argue~\cite[Corollary~21.5.1]{IM01} that in this case there exists an invariant  conformal structure under $f$; cf.~Proposition~\ref{P:Tukia} below. But, again, the lack of the Riemann Mapping Theorem does not allow to proceed.

\section{Schottky sets and groups}\label{S:SSG}

\noindent
Here we recall some notions and facts on {Schottky} sets and Schottky groups; see~\cite{BKM09} for background and more details. A \emph{Schottky set} $S$ is a closed subset of 2-sphere that can be written in the form 
\begin{equation}\label{stdS}
S=\hC\setminus \bigcup_{i\in I}B_i,
\end{equation}
 where the sets   $B_i,\ i\in I$, are pairwise disjoint 
open disks in $\hC$. It is assumed that $I$ consists of at least three elements, and, in fact, is infinite in what follows.  The boundary circles $\dee B_i,\ i\in I$, are referred to as \emph{peripheral circles}. They are characterized topologically by the fact that their removal does not separate $S$. In particular, any homeomorphism between Schottky sets must take peripheral circles to peripheral circles. It is immediate from the definition that M\"obius transformations take Schottky sets to Schottky sets.

The following is a quasiconformal uniformization result from~\cite{Bo11} that will be used in our proof of Theorem~\ref{T:surf}. 
\begin{theorem}\cite[Theorem~1.1]{Bo11}\label{T:Unif}
Suppose that $\Lambda$ is a closed subset of $\hC$ whose complement is a union of uniformly relatively separated uniform quasidisks. Then there exists a quasiconformal homeomorphism  $\beta$ of $\hC$ such that $S=\beta(\Lambda)$ is a Schottky set. 
\end{theorem}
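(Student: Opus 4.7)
The plan is to prove Theorem~\ref{T:Unif} by an approximation argument combining Koebe's classical circular uniformization theorem for finitely connected planar domains with a compactness argument for uniformly quasiconformal self-homeomorphisms of $\hC$. First, enumerate the complementary components $\{D_i\}_{i\in I}$ of $\Lambda$; the uniform relative separation and uniform quasidisk hypotheses force $I$ to be at most countable, with $\diam(D_i)\to 0$ if $I$ is infinite. Order them so that $\diam(D_1)\ge\diam(D_2)\ge\cdots$.

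For each $n\in\N$ consider the $n$-connected planar domain $\Omega_n=\hC\setminus\bigcup_{i=1}^n \overline{D_i}$. By Koebe's theorem on circular uniformization of finitely connected planar domains, there is a conformal homeomorphism $g_n\co\Omega_n\to\Omega_n^*$, where $\Omega_n^*=\hC\setminus\bigcup_{i=1}^n \overline{B_i^{(n)}}$ for some pairwise disjoint closed round disks $\overline{B_i^{(n)}}$, unique up to post-composition by an orientation preserving M\"obius transformation. Since each $\dee D_i$ is a $K$-quasicircle uniformly in $i$, the boundary values of $g_n$ induce an $\eta$-quasisymmetric homeomorphism $\dee D_i\to\dee B_i^{(n)}$ with $\eta$ independent of $i$ and $n$. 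By the standard quasisymmetric extension principle across quasidisks, this map extends to a $K'$-quasiconformal homeomorphism $D_i\to B_i^{(n)}$. Gluing these extensions to $g_n$ produces a $K'$-quasiconformal self-homeomorphism $\tilde g_n$ of $\hC$, conformal on $\Omega_n$, with $K'$ depending only on $K$. I would then normalize $\tilde g_n$ by a M\"obius post-composition fixing three prescribed points of $\hC$.

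By the compactness theorem for normalized families of $K'$-quasiconformal self-homeomorphisms of $\hC$, a subsequence $\tilde g_{n_k}$ converges uniformly to a $K'$-quasiconformal self-homeomorphism $\beta\co\hC\to\hC$. For each fixed $i$, once $n_k\ge i$, the image $\tilde g_{n_k}(\overline{D_i})=\overline{B_i^{(n_k)}}$ is a closed round disk; therefore $\beta(\overline{D_i})$ is the Hausdorff limit of closed round disks, which is itself a closed round disk (non-degeneracy being automatic because $\beta$ is a homeomorphism of $\hC$). The disks $\{\beta(D_i)\}_{i\in I}$ are pairwise disjoint by injectivity of $\beta$, so $\beta(\Lambda)=\hC\setminus\bigcup_{i\in I}\beta(D_i)$ is a Schottky set.

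The main obstacle is establishing that the quasiconformal bound $K'$ and the quasisymmetric distortion $\eta$ of the boundary maps can be chosen uniformly in $n$. This is where the uniform relative separation hypothesis enters essentially: without it, neighboring boundary components of $\Omega_n$ could crowd each other, causing the Koebe maps $g_n$ to have uncontrolled boundary distortion along $\dee D_i$ and forcing $K'$ to blow up with $n$; the compactness argument would then collapse. The indispensability of this hypothesis is also reflected in~\cite[Proposition~3.6]{Nt20} cited in the introduction, which shows that in the Sierpi\'nski-carpet setting uniform relative separation is necessary for the existence of a round quasiconformal uniformization.
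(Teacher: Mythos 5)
The paper does not prove this statement; it is quoted verbatim from M.~Bonk's paper \cite{Bo11}, and the only indication given of the proof is the remark that it ``uses Koebe's theorem and a delicate analysis that allows to control the quasiconformal dilatations of countably many quasiconformal maps using given geometry of $\Lambda$.'' Your outline --- exhaust by finitely connected domains, apply Koebe's circle-domain uniformization, extend across the quasidisks, normalize, and pass to a locally uniform limit of a normalized family of $K'$-quasiconformal maps --- is indeed the correct overall strategy, and it matches the paper's one-sentence description of Bonk's argument.

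However, there is a genuine gap, and it sits exactly where you place the ``main obstacle.'' The sentence ``Since each $\dee D_i$ is a $K$-quasicircle uniformly in $i$, the boundary values of $g_n$ induce an $\eta$-quasisymmetric homeomorphism $\dee D_i\to\dee B_i^{(n)}$ with $\eta$ independent of $i$ and $n$'' is not a consequence of the stated hypothesis and is given no argument. A conformal map of a multiply connected domain onto a circle domain can distort individual boundary components arbitrarily badly even when each component is a round circle to begin with; to get a uniform $\eta$ one must prove quantitative estimates showing, simultaneously, that the image disks $B_i^{(n)}$ are uniformly relatively separated and that $g_n$ has uniformly controlled distortion near each $\dee D_i$, with constants depending only on the separation constant $\delta$ and the quasicircle constant of the data and \emph{not} on $n$ or $i$. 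This is the entire technical content of Bonk's theorem (it occupies most of \cite{Bo11}, via modulus and relative-distance estimates for finitely connected circle domains), and without it your compactness step yields nothing, as you acknowledge. Two smaller points you should also make explicit if you pursue this route: gluing the $K'$-quasiconformal extensions on the $D_i$ to the conformal map $g_n$ on $\Omega_n$ produces a globally quasiconformal map only because quasicircles are quasiconformally removable; and the three normalization points must be chosen (e.g.\ in $\Lambda$) so that the normalized family is genuinely precompact and the limit is a homeomorphism rather than a degenerate map. As it stands, your proposal is a correct reduction of the theorem to its hardest step rather than a proof of it.
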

This is a quasiconformal analogue of P.~Koebe's (see, e.g.,~\cite{Co95}), and Z.-X.~He--O.~Schramm's~\cite{HS93} conformal uniformizations of finitely connected, respectively countably connected, domains.  The proof of Theorem~\ref{T:Unif} uses Koebe's theorem and a delicate analysis that allows to control the quasiconformal dilatations of countably many quasiconformal maps using given geometry of $\Lambda$.

If  $S\sub\hC$ is  a Schottky set written as in \eqref{stdS}, then for each $i\in I$, let $R_i\: \hC\to \hC$ be the reflection in the 
peripheral circle $\partial B_i$. 
The subgroup of the group of all M\"obius transformations on $\hC$
generated by the reflections $R_i$, $i\in I$,  is the 
\emph{Schottky group} associated to $S$ and denoted by $G_S$.  The group $G_S$ is a discrete group of M\"obius transformations with a presentation 
given by the generators $R_i$, $i\in I$, and the relations $R_i^2=\id$, $i\in I$.  

We set 
\begin{equation} \label{Sinfpart}
S_{\infty}=\bigcup_{g\in G_S}g(S).
\end{equation} 
This is the \emph{dissipative part} of $G_S$ in the terminology of~\cite{Su78}, and the sets $g(S),\ g\in G_S$, form a measurable partition of $S_\infty$.
 The \emph{conservative part} of $G_S$ consists of all points that are nested in infinitely many disks that are components of the complements of $g(S),\ g\in G_S$.  
The sphere $\hC$ is then decomposed into the {dissipative} and {conservative} parts.

If $\mathcal M=\C$ or $\hC$, a \emph{Beltrami coefficient} is a complex measurable function $\mu$  with $\|\mu\|_\infty<1$. If $h\colon U\to V$ is a quasiconformal map, 
the pullback of the trivial Beltrami coefficient $\mu=0$, i.e., the standard complex structure, is the \emph{Beltrami coefficient} $\mu_h$ \emph{associated to} $h$ and given by
$$
\mu_h=
\begin{cases}
{h_{\bar z}}/{h_z},\quad {\rm at\ points\ of\ differentiability\ of}\ h\ {\rm in}\ U,\\
0,\quad {\rm elsewhere}.
\end{cases}
$$ 
The quantity  
$$
K=\frac{1+\|\mu_h\|_\infty}{1-\|\mu_h\|_\infty}.
$$
is a \emph{dilatation} of $h$.
Conversely, the Measurable Riemann Mapping Theorem states that given any Beltrami coefficient $\mu$ on $\hC$, there exists a quasiconformal map $h$ of $\hC$ such that $\mu_h=\mu$ almost everywhere. Moreover, the map $h$ is uniquely determined by a post-composition by an orientation preserving  M\"obius transformation. 

If $S$ is a Schottky set, the group $G_S$ is the associated Schottky group,  and $\mu$ is a Beltrami coefficient on $S$, i.e., a Beltrami coefficient that vanishes on $\hC\setminus S$, we can define almost everywhere a new Beltrami coefficient
$$
\mu_{S}(z)=
\begin{cases}
g^*(\mu)(z),\quad z\in g(S),\ g\in G_S,\\
0,\quad {\rm otherwise}.
\end{cases}
$$
If $g$ is a M\"obius map, the pullback $g^*(\mu)$ of a Beltrami coefficient $\mu$ by $g$ is given by
$$
g^*(\mu)=\mu(g)\cdot \overline{g_z}/g_z,\quad {\rm or}\quad g^*(\mu)=\overline{\mu(g)}\cdot g_{\bar z}/\overline{g_{\bar z}},
$$
depending on whether $g$ is orientation preserving or reversing, respectively. 
It is immediate from the definition that $\mu_{S}$ agrees with $\mu$ almost everywhere on $S$, and, moreover, the coefficient $\mu_S$ is $G_S$-invariant, i.e., almost everywhere in $\hC$
$$
g^*(\mu_S)=\mu_S,\quad {\rm for\ all}\ g\in G_S.
$$ 
Applying the Measurable Riemann Mapping Theorem we obtain a quasiconformal map $h_S$ with 
$$
\mu_{h_S}=\mu_S.
$$ 

The following lemmas are needed in the proof of Theorem~\ref{T:surf}.
\begin{lemma}\cite[Lemma~7.2]{BKM09}\label{L:EqvExt}
If $S$ is a Schottky set and $h$ is a quasiconformal homeomorphism of $\hC$ with a $G_S$-invariant Beltrami coefficient,  then $S'=h(S)$ is a Schottky set. 
\end{lemma}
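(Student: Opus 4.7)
The plan is to show that for each $i\in I$, the image $h(\dee B_i)$ is a round circle; combined with $h$ being a homeomorphism, this forces $\hC\setminus h(S)=\bigcup_{i\in I}h(B_i)$ to be a disjoint union of round open disks, so that $S'=h(S)$ is a Schottky set per the definition \eqref{stdS}. To analyze $h(\dee B_i)$, I will introduce the conjugate map $\phi_i=h\circ R_i\circ h^{-1}$. Since $h$ is orientation-preserving and $R_i$ is an orientation-reversing involution of $\hC$, $\phi_i$ is an orientation-reversing involution; moreover it fixes $h(\dee B_i)$ pointwise because $R_i$ fixes $\dee B_i$ pointwise.

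The central claim is that $\phi_i$ is an \emph{anti-M\"obius} transformation. Granted this, the fixed-point set of $\phi_i$ is the zero set of an algebraic equation in $z,\bar z$ of low degree, so it must be empty, a single point, two points, or a round circle/line; since $h(\dee B_i)$ is a topological circle, only the last option is possible, in which case $\phi_i$ is the reflection in a round circle, and this round circle is precisely $h(\dee B_i)$. Consequently $h(B_i)$ is one of the two round open disks bounded by this circle, as required.

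To establish anti-M\"obius-ness of $\phi_i$, I would invoke the $G_S$-invariance of $\mu_h$: in particular $R_i^*(\mu_h)=\mu_h$ almost everywhere, where $R_i^*$ is the orientation-reversing pullback defined in Section~\ref{S:SSG}. Letting $c\:z\mapsto\bar z$, the composition $c\circ\phi_i$ is an orientation-preserving homeomorphism of $\hC$, and a direct chain-rule computation using the invariance relation $\mu_h=\overline{\mu_h(R_i)}\cdot(R_i)_{\bar z}/\overline{(R_i)_{\bar z}}$ shows that the ordinary Beltrami coefficient of $c\circ\phi_i$ vanishes almost everywhere. By the uniqueness part of the Measurable Riemann Mapping Theorem (or equivalently Weyl's lemma applied to the resulting locally conformal homeomorphism), $c\circ\phi_i$ is M\"obius, so $\phi_i=c\circ(c\circ\phi_i)$ is anti-M\"obius. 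The main obstacle in the argument is exactly this Beltrami-coefficient calculation: it requires careful bookkeeping of the conjugation factors coming from $R_i$ being orientation-reversing, but once it is carried out, the anti-M\"obius conclusion for each $\phi_i$ — and hence the Schottky property of $S'$ — follows routinely.
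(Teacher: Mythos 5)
Your proposal is correct and follows essentially the same route as the paper's own (sketched) proof: conjugating each reflection $R_i$ by $h$, using the $G_S$-invariance of $\mu_h$ to see that the conjugate preserves the standard complex structure and is therefore an orientation-reversing M\"obius transformation, and identifying $h(\dee B_i)$ with its fixed circle. Your version merely supplies the Beltrami-coefficient bookkeeping and the fixed-point-set classification in more detail than the paper does.
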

The proof is straightforward and uses the following observation. The $G_S$-invariance of $\mu_h$ implies that an orientation reversing homeomorphism of $\hC$ that is  conjugate by $h$ to the reflection in a peripheral circle of $S$ must preserve the standard complex structure of $\hC$, and thus is a M\"obius transformation. Since the fixed set of an orientation reversing M\"obius transformation is a circle in $\hC$, the claim follows.  

A related result is the following.
\begin{lemma}\cite[Proposition~5.5]{BKM09}\label{L:EqvExt2}
Let $f\: S\to S$ be a quasisymmetric map of a Schottky set $S$. 
Then $f$ has a $G_S$-equivariant quasiconformal extension $f_S$ to $\hC$. Namely, there exists a quasiconformal homeomorphism  $f_S$ of $\hC$ such that the restriction $f_S|_S$ coincides with $f$, and  for each $g\in G_S$ there exists $g'\in G_S$ with $f_S\circ  g=g'\circ f_S$ in $\hC$.
\end{lemma}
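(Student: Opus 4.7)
The plan is to construct $f_S$ by first extending $f$ to the dissipative part $S_\infty$ using equivariance as the defining relation, and then extending continuously across the conservative part. Since the peripheral circles of $S$ are topologically characterized (they are the Jordan curves in $S$ whose removal does not disconnect $S$), the homeomorphism $f$ permutes them, producing a bijection $\sigma\: I\to I$ with $f(\dee B_i)=\dee B_{\sigma(i)}$ for all $i\in I$. Because $G$ has the presentation $\langle R_i,\ i\in I\ |\ R_i^2=\id\rangle$ as a free product of copies of $\Z/2\Z$, the assignment $R_i\mapsto R_{\sigma(i)}$ extends to a group automorphism $\Phi\:G\to G$.

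Next I would define $f_S$ on $S_\infty=\bigcup_{g\in G}g(S)$ by $f_S(g(p))=\Phi(g)(f(p))$ for $p\in S$ and $g\in G$, so that the desired relation $f_S\circ g=\Phi(g)\circ f_S$ holds tautologically. The well-definedness check is the main algebraic input: if $g(p)=h(q)$ with $p,q\in S$ and $g\ne h$, then $k:=h^{-1}g$ is a nontrivial element of $G$ with $k(p)=q\in S$. The fundamental-domain property of $S$ for the action of $G$ forces $k=R_j$ for some $j\in I$ and $p=q\in\dee B_j$; since $f(p)\in\dee B_{\sigma(j)}$ is fixed by $R_{\sigma(j)}=\Phi(R_j)$, one obtains $\Phi(h)(f(p))=\Phi(g)\Phi(R_j)(f(p))=\Phi(g)(f(p))$. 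Parallel arguments based on injectivity of $\Phi$ on generators yield injectivity of $f_S$ on $S_\infty$, while $\Phi$ being onto gives surjectivity of $f_S\: S_\infty\to S_\infty$.

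Finally I would extend $f_S$ across the conservative part by continuity and verify global quasiconformality. On each tile $g(S)$ the restriction of $f_S$ equals the conjugate $\Phi(g)\circ f\circ g^{-1}$ of $f$ by M\"obius transformations, so its quasisymmetric distortion matches that of $f$ on $S$ uniformly in $g$. Using that the diameters of the tiles $g(S)$ shrink to zero with the word length of $g$, and that the conservative part has measure zero for the Schottky groups under consideration, $f_S$ extends uniquely and continuously to a homeomorphism of $\hC$ that is uniformly quasisymmetric on the full-measure open set $S_\infty$. Removability of measure-zero sets for quasiconformal homeomorphisms then promotes $f_S$ to a global quasiconformal homeomorphism of $\hC$. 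The main obstacle is this last step: securing uniform distortion control across infinitely many tiles, producing the continuous extension across the conservative part in the possibly infinitely generated setting, and adapting classical removability to this fractal context rather than to planar domains.
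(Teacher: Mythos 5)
Your construction of $f_S$ on the dissipative part $S_\infty$ via the induced automorphism $\Phi$ of $G$ and the formula $f_S(g(p))=\Phi(g)(f(p))$ is sound, and it is essentially an algebraic repackaging of the paper's iterative doubling $f\mapsto g_k'\circ f\circ g_k^{-1}$ across successive peripheral circles; the well-definedness check via the fundamental-domain property is the right one, and continuity of the extension to all of $\hC$ does follow from the shrinking of the tiles.

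The genuine gap is in the final step, exactly where you flag the "main obstacle." First, the conservative part of $G$ need not have measure zero: nothing in the hypotheses rules this out for an infinitely generated Schottky group, and the paper explicitly treats the case of a positive-measure conservative part elsewhere (see Section~\ref{S:Surf}). Second, even for a closed set of zero area, removability for quasiconformal homeomorphisms fails in general --- one needs something like $\sigma$-finite length or an NED-type condition, and there exist non-removable Cantor sets of measure zero --- so ``quasiconformal off $S_\infty$ plus removability'' does not deliver global quasiconformality. The paper's argument sidesteps both issues: one first extends the quasisymmetry $f\colon S\to S$ to a single $K$-quasiconformal homeomorphism of all of $\hC$ (an Ahlfors--Beurling-type extension, itself a nontrivial input from~\cite{BKM09}), and then redefines this \emph{global} extension across one peripheral circle at a time by the reflection formula. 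Each redefinition glues two $K$-quasiconformal maps along a circle (which is removable), and since the reflections are M\"obius the dilatation remains $K$ at every stage. The map $f_S$ is then obtained as a locally uniform limit of this sequence of normalized $K$-quasiconformal homeomorphisms by a standard normal-families argument, hence is $K$-quasiconformal outright, with no appeal whatsoever to the measure or removability of the conservative part. You would need to replace your last paragraph with an argument of this kind (or an equivalent one) for the proof to close.
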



The idea of the proof is to use reflections to inductively double across peripheral circles to produce a $G_S$-equivariant extension $f_S$ of $f$ defined on the dissipative part of $G_S$, a dense subset of $\hC$. 
Such a map $f_S$ then extends to a homeomorphism of $\hC$. The fact that $f_S$ is quasiconformal follows from application of the Ahlfors--Beurling extension as well as compactness arguments for families of normalized quasiconformal maps with uniformly bounded dilatation.


The following lemma, also needed in the proof of Theorem~\ref{T:surf} below, is contained in~\cite{BKM09} and is elementary.
\begin{lemma}\cite[Lemma~6.1]{BKM09}\label{L:NestedBalls}
Suppose that $f$ is a continuous map of $\R^n,\ n\in\N$, to itself that is differentiable at the origin $0$. Suppose further that there is a sequence of open geometric balls $(B_k)_{k\in\N}$ that contain 0, such that ${\rm diam}(B_k)\to0,\ k\to\infty$, and for each $k\in\N$, the set  $f(B_k)$ is a geometric ball. Then the derivative $Df(0)$ is a (possibly degenerate or orientation reversing)  conformal linear map, i.e., $Df(0)=\lambda T$, where $\lambda\geq 0$ and $T$ is a linear isometry.  
\end{lemma}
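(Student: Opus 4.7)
The plan is to rescale the balls $B_k$ to unit size and exploit the differentiability of $f$ at $0$ to identify the limit of the rescaled images with $L(\overline{B(0,1)})$, where $L := Df(0)$. Let $p_k$ and $r_k$ denote the center and radius of $B_k$. Since $0 \in B_k$ we have $|p_k| \le r_k$, and $r_k \to 0$ by assumption. Define
$$
\tilde f_k(y) = \frac{f(r_k y + p_k) - f(p_k)}{r_k}.
$$
Writing the differentiability expansion as $f(z) = f(0) + Lz + \phi(z)$ with $\phi(z)/|z| \to 0$, and using the bound $|r_k y + p_k|,\,|p_k| \le 2r_k$ for $y \in \overline{B(0,1)}$, one checks directly that $\tilde f_k(y) = Ly + o(1)$ uniformly for $y \in \overline{B(0,1)}$.

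Next I would observe that $\tilde f_k(B(0,1)) = (f(B_k) - f(p_k))/r_k$ is a geometric ball $B(c_k,\rho_k)$ with $\rho_k = s_k/r_k$, where $s_k$ is the radius of $f(B_k)$. The uniform convergence $\tilde f_k \to L$ on $\overline{B(0,1)}$ upgrades to Hausdorff convergence $\overline{B(c_k,\rho_k)} \to L(\overline{B(0,1)})$. The images are uniformly bounded, hence so are $c_k$ and $\rho_k$; passing to subsequences, $c_k \to c$ and $\rho_k \to \rho \ge 0$, and the Hausdorff limit is then the (possibly degenerate) closed ball $\overline{B(c,\rho)}$. Thus $L(\overline{B(0,1)})$ is simultaneously a (possibly degenerate) closed ellipsoid centered at $0$ and a (possibly degenerate) closed ball.

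The conclusion then splits into two elementary cases. If $\rho > 0$, then $L(\overline{B(0,1)})$ has non-empty interior and is a round ball, which forces $L$ to have full rank with all singular values equal, so $L = \lambda T$ with $\lambda > 0$ and $T \in O(n)$. If $\rho = 0$, then $L(\overline{B(0,1)})$ is a single point, forcing $L = 0$, the degenerate case $\lambda = 0$. The only mildly delicate step I anticipate is the uniform-convergence bookkeeping: because the differentiability expansion is anchored at $0$ but the $B_k$ need not be centered at $0$, one must exploit $|p_k| \le r_k$ to absorb the center shift into the error term. Beyond that I expect no substantive obstacle, since the identification of a Hausdorff limit of closed balls with a closed ball (possibly a point) is standard.
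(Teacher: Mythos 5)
Your proof is correct. The paper itself gives no proof of this lemma (it is quoted from \cite{BKM09}), but your blow-up argument --- rescaling $B_k$ to the unit ball, using differentiability at $0$ together with $|p_k|\le r_k$ to get uniform convergence of the rescaled maps to $L=Df(0)$, and then identifying the Hausdorff limit of round balls with the ellipsoid $L(\overline{B(0,1)})$ --- is essentially the standard argument used in the cited source, and your case split $\rho>0$ versus $\rho=0$ correctly rules out the intermediate-rank degenerate ellipsoids.
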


\section{Uniformly bounded dilatation}\label{S:UBD}

\noindent
In this and the following sections, unless explicitly stated, $\mathcal M$ is assumed to be a closed Riemann surface equipped with the canonical metric induced from the standard conformal metric of the universal cover. We denote by $\psi\:\widetilde{\mathcal M}\to\mathcal M$ the covering map. This is a local isometry, and hence a conformal map. 

\begin{lemma}\cite[Lemma~2.3]{KM10}\label{L:UQC}
Let $f\in{\rm Diff}^1(\mathcal M)$ permute a dense collection $\{D_i\}_{i\in I}$ of domains with bounded geometry, and let $\Lambda=\mathcal M\setminus\cup_{i\in I}D_i$ be the associated minimal set. 
Then the map $f$ as well as all its forward and backward iterates are uniformly quasiconformal along  $\Lambda$. 
\end{lemma}
\begin{proof}
We assume that the minimal set $\Lambda$  has positive measure, for otherwise there is nothing to prove. Also, since the covering map $\psi$ is a local isometry, below we make no distinction between  small disks in $\mathcal M$ and in $\widetilde{\mathcal M}$ and, whenever convenient, identify the map $f$ with its lift $\tilde f$  to $\widetilde{\mathcal M}$ under the covering map  $\psi$. 
 
Let $k\in\Z$ be fixed.
Since $\Lambda$ is nowhere dense and the closures $\overline{D_i},\ i\in I$, are pairwise disjoint,   for each $p\in\Lambda$ there exists a sequence of complementary domains $(D_i)_{i\in\N}$ of $\Lambda$ that accumulate at $p$, i.e., 
$$
{\rm diam}\, D_i\to0\quad {\rm and}\quad d_{\rm Hausd}(\overline{D_i},\{p\})\to0\  {\rm as}\  i\to\infty,
$$  
where $d_{\rm Hausd}$ denotes the Hausdorff distance.
Also, since $f$ is $\mathcal C^1$-dif\-fe\-ren\-ti\-able on a compact surface $\mathcal M$,  we have
\begin{equation}\label{E:Diff}
f^k(q+x)-f^k(q)-Df^k(q)(x)=o(x),\quad x\to0,
\end{equation}
where $|o(x)|/|x|\to0$ as $|x|\to0$, uniformly in $q$. 

The domains $D_i,\ i\in I$, having bounded geometry means that the\-re exist $C\geq1, p_i\in D_i$, and $0<r_i\leq R_i,\ i\in I$, with
$$
B(p_i, r_i)\subseteq D_i\subseteq B(p_i, R_i),\quad {\rm and}\quad {R_i}/{r_i}\leq C.
$$
The Hausdorff limit of a subsequence of the rescaled domains 
$$
\left((D_{i_m}-p_{i_m})/r_{i_m}\right)_{m\in\N}
$$ 
is thus a closed set $\Omega$ such that 
\begin{equation}\label{E:C1}
B(0,1)\subseteq\Omega\subseteq \overline{B(0, C)}.
\end{equation}
 Let $D_{i_m}',\ m\in\N$,  denote the image of $D_{i_m},\ m\in\N$, under the map $f^k$. The domains $D_{i_m}',\ m\in\N$, also satisfy
\begin{equation}\label{E:DomainImages}
B(p_{i_m}', r_{i_m}')\subseteq D_{i_m}'\subseteq B(p_{i_m}', R_{i_m}'),\quad {\rm and}\quad {R_{i_m}'}/{r_{i_m}'}\leq C,
\end{equation}
for some $p_{i_m}'\in D_{i_m}',\ 0<r_{i_m}'\leq R_{i_m}'$, all $m\in\N$, and with the same constant $C\ge1$ as above.
After possibly passing to a further subsequence, we may assume that the sequence 
$$
\left((D_{i_m}'-f^k(p_{i_m}))/r_{i_m}\right)_{m\in\N}
$$ 
Hausdorff converges to a closed set $\Omega'$. Here, we may choose the same sequence of scales $(r_{i_m})_{m\in\N}$ as for $(D_{i_m})_{m\in\N}$ because $f^k$ is in ${\rm Diff}^1(\mathcal M)$, and hence is locally bi-Lipschitz. From~\eqref{E:DomainImages} we know that there exists $p'\in\Omega'$ and $r'>0$ such that 
\begin{equation}\label{E:C2}
B(p',r')\subseteq\Omega'\subseteq \overline{B(p',Cr')},
\end{equation}
where, again, the constant $C\ge1$ is the same as above.
Since Equation~\eqref{E:Diff} is uniform in $q$, in particular holds for $q=p_{i_m},\ q+x\in D_{i_m},\ m\in\N$, one concludes that 
\begin{equation}\label{E:C3}
\Omega'=Df^k(p)(\Omega). 
\end{equation}
Combining~\eqref{E:C1}, \eqref{E:C2}, and \eqref{E:C3}, and using the fact that the constant $C$ does not depend on $p\in\Lambda$ or  $k\in\Z$, we conclude that the map $Df^k(p)$ has uniformly bounded dilatation, as claimed.
\end{proof}

We say that domains in $\R^2$ have the \emph{same shape} if they are homothetic to each other, and a domain in $\R^2$ has a \emph{generic shape} if the only elements of $SL(2,\R)$ preserving it are the elements of $SO(2,\R)$.  Note that most domains, including geometric disks and squares, have generic shape. However, non-circular ellipses and non-square rectangles do not have generic shape.

\begin{corollary}\label{C:CtoLUQC}
If $f\in{\rm Diff}^1(\T^2)$ permutes a dense collection of domains $\{D_i\}_{i\in I}$, and the lifts of $D_i,\ i\in I$, to $\R^2$ have the same generic shape, 
then the map $f$ is conformal along the minimal set $\Lambda$. 
\end{corollary}
\begin{proof}
Since the lifts of the domains $D_i,\ i\in I$, have the same shape, it implies that $D_i,\ i\in I$, have bounded geometry.
Also, a simple limiting argument gives that $\Omega$ and $\Omega'$ as in the proof of Lemma~\ref{L:UQC} are homothetic to each other. 
Moreover, $\Omega, \Omega'$ have generic shape as it is a closed condition. 
Hence, Equation~\eqref{E:C3} with $k=1$ implies that for each $p\in\Lambda$ we must have $Df(p)=\lambda(p) T(p)$, where $\lambda(p)\neq0$ is a scalar and $T(p)$ is an element of $SO(2)$, i.e., $f$ is conformal along $\Lambda$.   
\end{proof}

In the proof of Theorem~\ref{T:surf} we use a modification, Proposition~\ref{P:Tukia} below, of a result by P.~Tukia~\cite{Tu80} (see also~\cite{Su78}). 
Recall, a \emph{Beltrami form} on a Riemann surface $\mathcal M$ is a measurable $(-1,1)$-form $\mu=\mu(z)d\bar z/dz$, where we assume that $\|\mu\|_\infty<1$; see, e.g., \cite{BF14} for background on Beltrami forms. For a measurable subset $X$ of $\mathcal M$, a \emph{Beltrami form on} $X$ is a Beltrami form on $\mathcal M$ that is equal to 0 outside $X$.
If $h\: U\to V$ is a quasiconformal map between open subsets $U$ and $V$ of $\mathcal M$ and $\mu$ is a Beltrami form on $V$ (extended by 0 elsewhere), the \emph{pullback} of $\mu$ under $h$ is the Beltrami form given by
$$
h^*(\mu)=\left(\frac{h_{\bar z}+\mu(h)\overline{h_z}}{h_{z}+\mu(h)\overline{h_{\bar z}}}\right)\frac{d\bar z}{dz}.
$$ 
If $h$ is a quasiconformal map of $\mathcal M$, we say that a Beltrami form $\mu$ is  \emph{invariant} under $h$, or that the map $h$ \emph{preserves} a measurable bounded conformal structure,  if
$$
\mu=h^*(\mu).
$$ 

\begin{proposition}\label{P:Tukia}
Let $\Lambda$ be a measurable subset of a Riemann surface $\mathcal M$. Let $H$ be a countable group of quasiconformal maps $h$ of $\mathcal M$ that leave $\Lambda$ invariant and that are  uniformly quasiconformal along $\Lambda$. Then there exists a Beltrami form $\mu$ on $\Lambda$ that is invariant under each map $h\in H$. 

As a consequence, under the above assumptions and if $\mathcal M=\hC$, there exists a quasiconformal map $\gamma$ of $\hC$ that is conformal in $\hC\setminus \Lambda$ and that conjugates each $h\in H$ to a map conformal along $\gamma(\Lambda)$. Conversely, if $H$ is a group of quasiconformal maps of $\hC$ each element of which leaves a fixed Beltrami form $\mu$ on a measurable set $\Lambda$  invariant, then $H$ consists of uniformly quasiconformal maps along $\Lambda$. 
\end{proposition}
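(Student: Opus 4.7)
The plan is to execute the classical Sullivan--Tukia ``barycenter'' construction, adapted from open domains to the fractal setting. The starting observation, recalled in the introduction, is that the space $C(T_p\mathcal M)$ of conformal structures on a two-dimensional tangent plane is canonically isometric to the hyperbolic plane, that pullback by any non-degenerate linear isomorphism acts there as a hyperbolic isometry, and that the hyperbolic distance from the standard structure to its pullback under a $K$-quasiconformal germ is bounded by a function of $K$ alone. Consequently, uniform quasiconformality of $H$ along $\Lambda$ translates into the statement that, for almost every $p\in\Lambda$, the $H$-orbit of the standard structure $c_0$ under pullback is a bounded subset of the hyperbolic plane, with diameter uniformly bounded in $p$.

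Working on the $H$-invariant full-measure subset of $\Lambda$ on which every $h\in H$ is differentiable with non-degenerate derivative (available because $H$ is countable), I will define, for each such $p$,
$$
E(p)=\{h^*(c_0)(p)\: h\in H\}\subseteq C(T_p\mathcal M),
$$
and take $\mu(p)$ to be the circumcenter of $E(p)$, that is, the center of the smallest closed hyperbolic ball containing $E(p)$; such a center exists and is unique for every bounded subset of the hyperbolic plane. Since $H$ is a group leaving $\Lambda$ invariant,
$$
h^*(E(h(p)))=\{(g\circ h)^*(c_0)(p)\: g\in H\}=E(p)
$$
for each $h\in H$, and because $h^*$ acts as a hyperbolic isometry it carries the circumcenter of $E(h(p))$ onto that of $E(p)$. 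This gives $h^*(\mu(h(p)))=\mu(p)$, which is exactly the desired invariance $h^*\mu=\mu$ on $\Lambda$. Measurability of $\mu$ will follow from measurability of each pullback $h^*(c_0)$, countability of $H$, and continuity of the circumcenter map on uniformly bounded subsets in the Hausdorff topology.

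For the consequence on $\hC$, I will extend $\mu$ by zero off $\Lambda$ to a global Beltrami coefficient and apply the Measurable Riemann Mapping Theorem to obtain a quasiconformal $\gamma\:\hC\to\hC$ with $\mu_\gamma=\mu$; this $\gamma$ is automatically conformal on $\hC\setminus\Lambda$ because $\mu$ vanishes there. Applying the chain rule for Beltrami coefficients to $\gamma\circ h\circ\gamma^{-1}$ and using $h^*\mu=\mu$ on $\Lambda$, one finds $\mu_{\gamma\circ h\circ\gamma^{-1}}=0$ on $\gamma(\Lambda)$, so the conjugate is conformal along $\gamma(\Lambda)$. For the converse, I read the invariance equation $\mu(p)=h^*(\mu(h(p)))$ pointwise in the Poincar\'e-disk model: the hyperbolic isometry $h^*$ of the disk sends $\mu(h(p))$ to $\mu(p)$, and since $h^*(0)=\mu_h(p)$, the triangle inequality in the hyperbolic metric yields a bound on $|\mu_h(p)|$ by a function of $\|\mu\|_\infty<1$ alone, which is exactly uniform quasiconformality along $\Lambda$.

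The main obstacle is not the conceptual construction, which is classical, but the careful handling of the almost-everywhere issues. I will have to argue that a single full-measure subset of $\Lambda$ can be chosen on which all elements of $H$ and all their compositions are simultaneously differentiable with non-degenerate derivatives; countability of $H$ is essential here. Once that foundation is laid, the passage from bounded orbits in the hyperbolic plane to an invariant Beltrami form, and thence to the uniformization statement on $\hC$, tracks the original Sullivan--Tukia argument for quasiconformal groups on open sets essentially verbatim, which is why the author notes that the proof may be sketched.
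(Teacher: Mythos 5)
Your proposal is correct and follows essentially the same route as the paper: both use the Sullivan--Tukia construction with the circumcenter (center of the smallest closed hyperbolic disk) of the bounded orbit $\{h^*(c_0)(p)\}$ in the hyperbolic plane of conformal structures, followed by the Measurable Riemann Mapping Theorem for the statement on $\hC$. The only cosmetic differences are that the paper phrases the orbit in terms of Beltrami coefficients $\mu_h(p)$ in local charts rather than pulled-back conformal structures, and proves the converse by conjugating with $\gamma$ rather than by your (equally valid) direct triangle-inequality argument.
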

\begin{proof}
As in Lemma~\ref{L:UQC}, we may assume that $\Lambda$ has positive measure for otherwise there is nothing to prove.

In local charts of the universal cover, for arbitrary quasiconformal maps $f, g$, expressing $\mu_{g\circ f}$ in terms of $\mu_g$ and $\mu_f$, we have
\begin{equation}\label{E:Composition}
\mu_{g\circ f}=f^*(\mu_g)=T_{f}(\mu_g(f)),
\end{equation}
where
$$
T_{f}(w)=\frac{\mu_f+\frac{\overline{f_z}}{f_z}w}{1+\overline{\mu_f}\frac{\overline{f_z}}{f_z}w}
$$
is an isometry of the disk model hyperbolic plane $\D$.

We consider
$$
X(p)=\{\mu_{h}(p)\:\ h\in H\},
$$
a subset of $\D$ defined for almost every $p\in  \Lambda$ up to a rotation, which depends on a local chart for $p$ and is an isometry of the hyperbolic plane $\D$.  Since $H$ is a group, Equation~\eqref{E:Composition} gives 
\begin{equation}\label{E:Transform}
\begin{aligned}
T_{h}(X(h(p)))&=\left\{T_h(\mu_{h'}(h(p)))\:h'\in H\right\}\\&=\left\{\mu_{h'\circ h}(p)\:h'\in H\right\}=X(p),
\end{aligned}
\end{equation}
for each $h\in H$, almost every $p$ in $\Lambda$, and an appropriate choice of local charts for $p$ and $h(p)$.
If $X$ is a non-empty bounded subset of $\D$, we define by $P(X)$ to be the center of the smallest closed hyperbolic disk that contains $X$. Elementary hyperbolic geometry  shows that $P(X)$ is well defined.
Moreover, $P$ satisfies the following properties (see~\cite{Tu80} for details): 

\noindent
1) if $X$ is bounded above by $\delta$ in the hyperbolic metric of $\D$, then $P(X)$ is at most hyperbolic distance $\delta$ from the origin in $\D$;

\noindent
2) the map $P$ commutes with each hyperbolic isometry of $\D$, i.e., 
$$P(T(X))=T(P(X))$$ for any hyperbolic isometry $T$ of $\D$ and arbitrary non-empty bounded subset $X$ of $\D$.  

We now define
$$
\mu(p)=
\begin{cases}
P(X(p))\frac{d\bar z}{dz},\quad {\rm for\ almost\ every}\ p\in \Lambda,\\
0,\quad {\rm elsewhere\ in}\ \mathcal M. 
\end{cases}
$$
The assumption that the elements of $H$ are $K$-quasiconformal along $\Lambda$ with the same constant $K$ and property 1) above give that $\mu$ is an essentially bounded Beltrami form.
Equation~\eqref{E:Transform} combined with the commutative  property 2) give, in appropriate local charts,
$$
\mu(p)=P(T_{h}(X(h(p))))=T_{h}(P(X(h(p))))=T_{h}(\mu(h(p))),
$$
for each $h\in H$ and almost every $p$ in $\Lambda$. 
This is equivalent to 
\begin{equation}\label{E:invbc}
\mu=h^*(\mu),
\end{equation}
on $\Lambda$ for each $h\in H$, i.e., $\mu$ is invariant under each $h\in H$.

Now, if $\mathcal M=\hC$, by the Measurable Riemann Mapping Theorem  there exists a quasiconformal map $\gamma$ of $\hC$ with $\mu_\gamma=\mu$. Since $\mu=0$ outside $\Lambda$, the map $\gamma$ is conformal in $\hC\setminus \Lambda$. It remains to check that
$$
\mu_{\gamma\circ h\circ \gamma^{-1}}(p)=0
$$
for each $h\in H$  and at almost every point $p$ of  $\gamma(\Lambda)$. This is equivalent to 
\begin{equation}\label{E:Conjugacy}
\mu_\gamma(p)=\mu_{\gamma\circ h}(p),
\end{equation}
for each $h\in H$  and at almost every point $p$ of  $\Lambda$. But $\mu_{\gamma\circ h}=h^*(\mu_\gamma)$, and since $\mu_\gamma=\mu$, Equation~\eqref{E:Conjugacy} is the same as~\eqref{E:invbc}, i.e., each map $\gamma\circ h\circ \gamma^{-1},\ h\in H$, is conformal along $\gamma(\Lambda)$.

Finally, to prove the converse statement, we just observe that, if $\gamma$ is a quasiconformal map of $\hC$ with $\mu_\gamma=\mu$, each $\gamma\circ h\circ \gamma^{-1},\ h\in H$, is conformal along $\gamma(\Lambda)$, and hence the elements $h\in H$ are uniformly quasiconformal along $\Lambda$. 
\end{proof}

An alternative proof of Proposition~\ref{P:Tukia} uses the barycenter of the convex hull to define $\mu$ rather than the center of the smallest hyperbolic disk as above. This approach was used in~\cite{Su78}.

\section{Covering properties}\label{S:CoPr}

\noindent
In this section we prove auxilliary lemmas that will be used in the proof of Theorem~\ref{T:surf2}. The reader may initially want to skip the proofs in this section and move to the next one where Theorem~\ref{T:surf2} is proved.

\begin{lemma}\label{L:UQ} 
Let $\mathcal M$ be a closed Riemann surface that is either hyperbolic or flat 2-torus $\T^2$.
Let $\{D_i\}_{i\in I}$ be a collection of uniform quasidisks in $\mathcal M$. 
If $\{\widetilde D_i\}_{i\in \tilde I}$ is the collection of lifts of $D_i,\ i\in I$, to the universal cover $\widetilde{\mathcal M}$, then $\{\widetilde D_i\}_{i\in \tilde I}$ consists of uniform quasidisks in the spherical  metric of $\D$ or $\C$,  respectively. 
\end{lemma}
\begin{proof}
First, the lifts of the closures $\overline{D_i},\ i\in I$, to the universal cover are closed topological disks. Indeed, this follows from the facts that each $\overline{D_i}$ is embedded in $\mathcal M$ and that the covering map $\psi$ is a local homeomorphism. 

We use the two point condition~\eqref{E:BT} for quasicircles to  verify that the lifts $\widetilde D_i,\ i\in\tilde I$,  of $D_i,\ i\in I$, are uniform quasidisks in the intrinsic metric of the universal cover $\widetilde{\mathcal M}$, which is either $\D$ with the hyperbolic metric or $\C$ with the Euclidean metric. First, since $\mathcal M$ is compact and $D_i,\ i\in I$, are uniform quasidisks, we must have ${\rm diam}(D_i)\to0$, i.e., given any $\delta>0$, only finitely many of $D_i,\ i\in I$, have diameters at least $\delta$. For, suppose not. Then there exists a sequence $(i_k)_{k\in\N}$, of distinct elements such that 
 ${\rm diam}(D_{i_k})\ge\delta$ for some $\delta>0$. Let $p^1_{k}$, $p^2_k\in\dee D_{i_k}$ be two points with
 $$
 {\rm dist}(p^1_{k}, p^2_k)={\rm diam}(D_{i_k}),\quad {\rm for\ all}\ k\in\N.
  $$  
  By passing to subsequences, we may assume that $(p^1_{k})_{k\in\N}$,  $(p^2_{k})_{k\in\N}$ converge to points $p^1, p^2$, respectively, necessarily with ${\rm dist}(p^1, p^2)\ge\delta$. Now, if $0<\epsilon<\delta/2$ is such that in the disk $B(p^1,2\epsilon)$ an inverse branch of the covering map $\psi$ is an isometry, we get a contradiction with the two point condition for $k$ large enough, for pairs of points in the intersection of $\dee D_{i_k},\ k\in\N$, with the boundary circle of $B(p^1,\epsilon)$.
  
Next, the intrinsic diameters of $\widetilde D_i,\ i\in\tilde I$, are uniformly bounded above. Indeed, this follows from the facts that each $\widetilde D_i,\ i\in\tilde I$, projects to some $D_i,\ i\in I$, the diameters of the latter go to 0, and the covering map $\psi$ is a local isometry.

To show that $\widetilde D_i,\ i\in\tilde I$, are uniform quasidisks,  assume that there exists a (possibly constant) sequence $(i_k)_{k\in\N}$ and two sequences of distinct points $(\tilde p^1_k)_{k\in\N}, (\tilde p^2_k)_{k\in\N}$, with $\tilde p^1_k, \tilde p^2_k\in\partial \widetilde D_{i_k}$, such that 
\begin{equation}\label{E:twoptfails}
{\rm dist}(\tilde p^1_k, \tilde p^2_k)/\min\{{\rm diam}(\tilde J^1_k),{\rm diam}(\tilde J^2_k)\}\to0,\quad k\to\infty,
\end{equation}
where $\tilde J^1_k, \tilde J^2_k$ are two complimentary arcs of $\partial \widetilde D_{i_k}\setminus\{\tilde p^1_k,\tilde p^2_k\}$. Here, the distances and diameters are in the intrinsic metric of the universal cover $\widetilde{\mathcal M}$.
Since ${\rm diam}(\tilde J^1_k),{\rm diam}(\tilde J^2_k)$ are uniformly bounded above and  $\mathcal M$ is compact, we must have ${\rm dist}(\tilde p^1_k, \tilde p^2_k)\to0,\ k\to\infty$, and for $p^1_k=\psi(\tilde p^1_k), p^2_k=\psi(\tilde p^2_k),\ k\in\N$,  by possibly passing to subsequences, we may assume that $p^1_k,p^2_k\to p,\ k\to\infty$. 
Using the fact that the covering map is a  local isometry, let $\Omega$ and $\widetilde\Omega$ be open disks in $\mathcal M$ and  $\widetilde{\mathcal M}$, respectively, such that $\Omega$ contains $p$ and $\psi\: \widetilde\Omega\to\Omega$ is an isometry. 
By applying deck transformations, i.e., by acting by elements of the fundamental group, which are isometries, we may assume that for all $k$ large enough $\tilde p^1_k, \tilde p^2_k \in\widetilde\Omega$. If $J_k^1=\psi(\tilde J_k^1), J_k^2=\psi(\tilde J_k^2)$, the uniform two point condition for $\{D_i\}_{i\in I}$ gives $\min\{{\rm diam}(J^1_k),{\rm diam}(J^2_k)\}\to0,\ k\to\infty$. But that implies that for $k$ large enough, one of the arcs $J^1_k, J^2_k$ with the smaller diameter is contained in $\Omega$.
 Convergence~\eqref{E:twoptfails} thus descends under $\psi$ to $\mathcal M$, namely it holds for $D_{i_k}, p^1_k, p^2_k$ in place of $\widetilde D_{i_k}, \tilde p^1_k, \tilde p^2_k$, respectively. 
The domains $D_i,\ i\in I$, are assumed to be uniform quasidisks however, and thus we arrive at a contradiction and conclude that $\widetilde D_i,\ i\in\tilde I$, are uniform quasidisks in the intrinsic metric of the universal cover $\widetilde{\mathcal M}$. 

We next argue that $\widetilde D_i,\ i\in\tilde I$, are uniform quasidisks in the spherical metric.
Let $\mathcal F$ be a fixed fundamental domain in $\widetilde{\mathcal M}$ under the action of the fundamental group of $\mathcal M$.
Since $\mathcal M$ is compact, we may assume that the closure $\overline{\mathcal F}$ of $\mathcal F$ is compact in $\widetilde{\mathcal M}$. It was proved above that all $\widetilde D_i,\ i\in\tilde I$, have uniformly bounded diameters. Since in any compact subset of $\widetilde{\mathcal M}$ the intrinsic and spherical metrics are bi-Lipschitz equivalent, all $\widetilde D_i$ that intersect $\overline{\mathcal F}$ are uniform quasidisks in the spherical metric. 
For an arbitrary $\widetilde D_i$, i.e., one that does not intersect $\overline{\mathcal F}$, we can use the Koebe Distortion Theorem (see, e.g., \cite[p.~32]{Du83}, or the Egg Yolk Principle~\cite[Theorem~11.14]{He01}), because deck transformations are conformal automorphisms of $\widetilde{\mathcal M}$. 
\end{proof}

\begin{lemma}\label{L:US} 
Let $\mathcal M$ be a closed Riemann surface that is either hyperbolic or flat 2-torus $\T^2$.
Let $\{D_i\}_{i\in I}$ be a collection of domains in $\mathcal M$ that consists of uniform quasidisks that are uniformly relatively separated. 
If $\{\widetilde D_i\}_{i\in \tilde I}$ is the collection of lifts of $D_i,\ i\in I$, to the universal cover, then $\widetilde D_i,\ i\in \tilde I$, are uniformly relatively separated in the spherical  metric of $\D$ or $\C$,  respectively. Also, in the case when $\mathcal M$ is hyperbolic,  $\T, \widetilde D_i,\ i\in \tilde I$, are uniformly relatively separated in the spherical metric of $\overline \D$.
\end{lemma}
\begin{proof}
The argument is similar to establishing that all $\widetilde D_i,\ i\in\tilde I$, are uniform quasidisks, and we first show that uniform relative separation holds in the intrinsic metric. Namely,  assume that for two sequences $(\widetilde D_{i_k})_{k\in\N}, (\widetilde D_{j_k})_{k\in\N}$ of domains we have
$$
\frac{{\rm dist}(\widetilde D_{i_k},\widetilde D_{j_k})}{\min\{{\rm diam}(\widetilde D_{i_k}),{\rm diam}(\widetilde D_{j_k})\}}\to0,\quad k\to\infty.
$$
By applying elements of the fundamental group of $\mathcal M$, we may assume that one of the domains, say $\widetilde D_{i_k}$, intersects the closure $\overline{\mathcal F}$ of a fixed fundamental domain. 
Since, by the proof of Lemma~\ref{L:UQ} the intrinsic diameters of $\widetilde D_i,\ i\in\tilde I$, are uniformly bounded above, 
 we must have ${\rm dist}(\widetilde D_{i_k},\widetilde D_{j_k})\to0,\ k\to\infty$.
Also, since the fundamental group of $\mathcal M$ is finitely generated, we may assume that for the projections $D_{i_k}=\psi(\widetilde D_{i_k}), D_{j_k}=\psi(\widetilde D_{j_k})$ under the covering map one has $D_{i_k}\neq D_{j_k}$ for all $k$, and ${\rm dist}(D_{i_k},D_{j_k})\to0,\ k\to\infty$. Hence, the uniform relative se\-pa\-ra\-tion assumption on $D_i,\ i\in I$, gives $\min\{{\rm diam}(D_{i_k}),{\rm diam}(D_{j_k})\}\to0,\ k\to\infty$. Compactness of $\mathcal M$  gives a point $p\in\mathcal M$ such that, after possibly passing to subsequences, one of the sequences, say $(D_{i_k})$, converges to $p$. Local isometry property of the covering map $\psi$ near $p$ along with the assumption of uniform relative separation of $D_i,\ i\in I$, produce a contradiction, and thus $\widetilde D_i,\ i\in\tilde I$, are uniformly relatively separated in the intrinsic metric. Now, in any given compact subset of $\D$, the intrinsic and spherical metrics are bi-Lipschitz equivalent to each other, and thus we conclude uniform relative separation in the spherical metric for domains $\widetilde D_i,\ i\in\tilde I$, intersecting $\overline{\mathcal F}$. Again, one uses the Koebe Distortion Theorem  to conclude that uniform relative separation  in the Euclidean metric spreads around in $\widetilde{\mathcal M}$.  

The assertion that, in the hyperbolic surface case, $\widetilde D_i,\ i\in\tilde I$, are uniformly relatively separated in the spherical metric from the unit circle boundary $\T$ follows from  the earlier conclusions that $\widetilde D_i,\ i\in\tilde I$, have uniformly bounded diameters in the hyperbolic metric and are uniform quasidisks in the spherical metric,  as well as elementary hyperbolic geometry. Namely, the fact that if a hyperbolic radius of a disk $D$ in $\D$ is fixed, its spherical radius is roughly proportional to the spherical distance of $D$ to the boundary circle $\T$.
\end{proof}

\section{Proofs of Theorems~\ref{T:surf} and~\ref{T:surf2}}\label{S:Surf}

\noindent
We first reduce Theorem~\ref{T:surf} to Theorem~\ref{T:surf2}.
By Lemma~\ref{L:UQC}, a map $f\in{\rm Diff}^1(\mathcal M)$ and all its  forward and backward iterates have uniformly bounded quasiconformal dilatation almost everywhere on $\Lambda$. By the first part of Proposition~\ref{P:Tukia} applied to the cyclic group  generated by the map $f$ we conclude that there exists a Beltrami form on $\Lambda$ invariant under $f$. Since each $f\in{\rm Diff}^1(\mathcal M)$ on a closed Riemann surface $\mathcal M$ is quasiconformal, Theorem~\ref{T:surf} is reduced to Theorem~\ref{T:surf2}.
We proceed by proving Theorem~\ref{T:surf2}.

We assume that $f$ as in Theorem~\ref{T:surf2} exists and let $\tilde f$ denote a lift of $f$ to the universal cover $\widetilde{\mathcal M}$ of $\mathcal M$. Below, we identify $\widetilde{\mathcal M}$ with either $\D$ or $\C$, depending on whether $\mathcal M$ is hyperbolic or $\T^2$. Let $D$ denote the group of deck transformations, which is finitely generated. Finally, let $\widetilde\Lambda$ be the preimage of $\Lambda$ under the covering map $\psi\colon\widetilde{\mathcal M}\to\mathcal M$. Also, we denote by $H$ the group acting on $\widetilde{\mathcal M}$ generated by $\tilde f$ and generators of $D$. The group $H$ is finitely generated and $\tilde f$ commutes with elements of the group $D$ because $\tilde f$ is a lift of a map $f\:\mathcal M\to\mathcal M$. 

The first step is to apply Theorem~\ref{T:Unif} to $\widetilde\Lambda$ viewed as a subset of $\hC$ via the stereographic projection.  By Lemmas~\ref{L:UQ} and~\ref{L:US} conditions of that theorem are met.
Thus there exists a quasiconformal map $\beta$ of $\hC$ such that each $\widetilde D_i,\ i\in \tilde I$, is mapped  by $\beta$ to a geometric disk $\widetilde B_i$. We may assume that, in the hyperbolic surface case, the unit disk is preserved by $\beta$.
 The image $S$ of the closure of $\widetilde\Lambda$ in $\hC$ under $\beta$ is thus a {Schottky set} in $\hC$, i.e., a subset of $\hC$ whose complement is a union of disjoint geometric disks. 
Let $G$ be the {Schottky group} associated to $S$, i.e., the group generated by reflections in $\dee\widetilde B_i,\ i\in \tilde I$, and,  in the hyperbolic surface case, by the reflection in $\T$. 
 We denote by $H_\beta$ the conjugate group of $H$ by the map $\beta$.  Recall that $H$ is generated by the group $D$ of deck transformations as well as the map $\tilde f$ acting on $\widetilde M$.

From the converse statement of Proposition~\ref{P:Tukia} we know that the map $f$ and all its iterates are uniformly quasiconformal along $\Lambda$.
The covering map being  a local isometry implies that the map $\tilde f$ along with all its iterates have uniformly bounded quasiconformal dilatation almost everywhere on $\widetilde\Lambda$.   
Since $\beta$ is quasiconformal, the conjugate $\tilde f_\beta$ of $\tilde f$ by $\beta$ and all its iterates, forward and backward, are uniformly quasiconformal  along $S$. Likewise, since the elements of $D$ are conformal and commute with $\tilde f$, all elements of  $H_\beta$  are uniformly quasiconformal along $S$.

Now, we apply Proposition~\ref{P:Tukia} to the Schottky set $S$ and the group $H_\beta$. This way we obtain a quasiconformal map $\gamma$ of $\hC$, conformal at almost every point in $\hC\setminus S$, that conjugates each map $h_\beta\in H_\beta$ to a map $h_\gamma$  that is conformal along $\gamma(S)$.  
Let $\mu_\gamma$ be the Beltrami coefficient of $\gamma$. As discussed in Section~\ref{S:SSG}, we can redefine  $\mu_\gamma$ on the dissipative part of the Schottky group $G$, other than on $S$, to obtain a $G$-invariant Beltrami coefficient $\mu_G$. Let $\gamma_G$ be a quasiconformal map with Beltrami coefficient $\mu_G$, which exists by the Measurable Riemann Mapping Theorem. By Lemma~\ref{L:EqvExt}, the map $\gamma_G$ must take the Schottky set $S$ onto another Schottky set $S'$. We continue to denote by $G$ the Schottky group generated by reflections in the peripheral circles of $S'$. Indeed, the Schottky groups associated to $S$ and $S'$ are isomorphic as abstract groups. 
We denote by $H_G$ the quasiconformal group conjugate to $H_\beta$ by the quasiconformal map $\gamma_G$.
Because $\mu_G=\mu_\gamma$ on $S$, each map  $h\in H_G$ is conformal along the Schottky set $S'$. 

Using the fact that a quasiconformal map of $\hC$ is quasisymmetric and applying Lemma~\ref{L:EqvExt2}, we conclude that the restriction of each $h\in H_G$ to $S'$ has a $G$-equivariant quasiconformal extension $h_G$ to $\hC$. Equivariance of $h_G$, in turn, guarantees that if $\dee B_i,\ i\in \tilde I$, is an arbitrary peripheral circle of $S'$ and $g\in G$ is arbitrary, then $h_G(g(\dee B_i))$ is a geometric circle  in $\hC$. Indeed, the equivariance of $h_G$ gives
$$
h_G(g(\dee B_i))=g'(h_G(\dee B_i))=g'(h(\dee B_i)),
$$
for some $g'\in G$. Since $h(\dee B_i)$ is a peripheral circle of $S'$, and $g'$ is a M\"obius transformation, the claim follows.

\begin{lemma}\label{L:hConf}
Each $h_G$ is a M\"obius transformation.
\end{lemma}
\begin{proof} 
Since $h_G$ is quasiconformal, it is  differentiable almost everywhere in $\C$. 
Thus it is enough  to show that $h_G$ is conformal at almost every point $p$ of differentiability.
We consider the following two cases: either $p$ belongs to the dissipative part of $G$, i.e., $p\in g(S')$ for some $g\in G$, or $p$ is in the conservative part, i.e., there exists a sequence $(g_k)_{k\in\N}$ of elements of $G$ such that $p\in B_{i_k},\ k\in\N$, where $B_{i_k}$  is a complementary disk component of $g_k(S')$. In the latter case, the density of the dissipative part of $G$ gives $\{p\}=\cap_{k\in\N} B_{i_k}$. The first case is only relevant when $S'$, and hence the dissipative part, has positive measure, while the second case needs to be considered only when the conservative part has positive measure.

We start with the first case, i.e., we assume that $S'$ has positive measure and $p\in g(S')$ for some $g\in G$. We may further assume that $p$ is a Lebesgue density point of $g(S')$ because such points form a set of full measure. But, since $g$ is a M\"obius transformation and $h_G$ is $G$-equivariant and conformal along $S'$, the map $h_G$ is conformal at almost every such point $p$ in the dissipative part of $G$.  

We now deal with the conservative part, i.e., when 
$$
\{p\}=\cap_{k\in\N} B_{i_k},\ {\rm diam}(B_{i_k})\to0,
$$ 
as $k\to\infty$, where $B_{i_k},\ k\in\N$,  is a complementary disk of $g_k(S')$ with $g_k\in G$. 
Pre- and post-composing $h_G$ with appropriate M\"obius transformations we obtain a map $h_p$ for which we can assume that  $p=h_p(p)=0$. 
Because $h_G$ is $G$-equivariant, for all $k\in\N$, it takes complementary disks of $g_k(S')$  to complementary disks of $g_k'(S')$, for some $g_k'\in G,\ k\in\N$. Since all the complementary disks are geometric disks and translations preserve such disks, the conformality of $h_p$ at 0 follows from Lemma~\ref{L:NestedBalls}. Hence $h_G$ is conformal at $p$.
The map  $h_G$ being quasiconformal in $\hC$ and conformal at almost every point implies that it is conformal in $\hC$ by Weyl's lemma.
Thus $h_G$ must be a M\"obius transformation. 
\end{proof}
 
 We next replace $h_G$ by a M\"obius transformation $h_G'$ that fixes 0 and $\infty$. In the case of 2-torus $\T^2$, the map $h_G$ itself fixes $\infty$. Post-composing $h_G$ with a translation, we can guarantee that 0 is also fixed. In the  hyperbolic surface case we may post-compose $h_G$ by an appropriate M\"obius transformation preserving the unit disk so that 0 and $\infty$ become fixed. This is possible because $h_G$ is $G$-equivariant, and, particularly, commutes with the reflection in $\T$. 

\begin{lemma}\label{L:hIso}
Each $h_G'$ is an isometry of $\C$.
\end{lemma}
\begin{proof}
Because $h_G'$ fixes 0 and $\infty$, it has to be of the form $h_G'(z)=\lambda Tz$ in  $\C$, where $\lambda>0$ and $T$ is a linear isometry. In addition,  since in the hyperbolic surface case the map $h_G'$ preserves the unit disk, we must have that $\lambda=1$, i.e., $h_G'$ is a rotation. The same is true in 2-torus $\T^2$ case.
Indeed, from the arguments above we know that the deck transformations $d_1(z)=z+1$ and $d_i(z)=z+i$ for $\T^2$ are conjugated on $\widetilde\Lambda$ by the quasiconformal map $\gamma_G\circ\beta$ to the linear maps $\delta_1(z)=a_1z+b_1,\ \delta_i(z)=a_2z+b_2$ of $S'$, respectively. Assume that $a_1\neq 1$. Then $\delta_1$ has a fixed point $p$ in $\C$. It cannot be in $S'$ because $d_1$ does not have fixed points. Then the fixed point $p$ must be in one of the complementary disks of $S'$. This disk must then be fixed setwise by $\delta_1$, and hence $|a_1|=1$, i.e., $\delta_1$ is a Euclidean isometry. Therefore, we can conjugate $\delta_1$ by a translation to a rotation of $\C$ that preserves a Shottky set. Such a rotation would have to have a finite order, a contradiction because $d_1$ has infinite order. Thus $a_1=1$ and $\delta_1$ is a translation of $\C$. 
Likewise, the map $\delta_2$ is a translation, and, in particular both maps, $\delta_1$ and $\delta_2$, are isometries of the Euclidean plane. It follows then that the Euclidean radii of the complementary disks of $S'$ are uniformly bounded in the $\T^2$ case. This, in turn, implies that the restriction to $S'$ of the conjugate $f_G$ of the map $\tilde f$ by $\gamma_G\circ\beta$  is  a Euclidean isometry. 
Thus, in either case, we have $h_G'(z)=Tz$ is a linear isometry of $\C$. 
\end{proof}

Lemmas~\ref{L:hConf} and~\ref{L:hIso} imply that $h_G$ is an isometry in the intrinsic metric of $\widetilde{\mathcal M}$ because $h_G$ and $h_G'$ differ by a post-composition with an intrinsic isometry of $\widetilde{\mathcal M}$. In particular, each map $h_G\in H_G$, cannot change the intrinsic radii  of complementary  disks of $S'$. 
This is impossible. Indeed, the map $f$ is assumed to permute complementary domains of $\Lambda$. Let $\Omega$ be a fixed complementary component of $ \Lambda$, let $\widetilde\Omega$ be a fixed lift of $\Omega$ by the covering map $\psi$, and let $B$ be the disk $\gamma_G\circ\beta(\widetilde\Omega)$.  For the $G$-equivariant map $f_G$, the sequence $(B_k)_{k\in\N}$, where $B_k=f_G^k(B),\ k\in\N$, consists of the disks with the same intrinsic radii. We can apply conjugates of the deck transformations, which are isometries, to bring each disk $B_k$ to intersect a fixed fundamental domain. However, there are only finitely many disks of the same radius that can intersect such a fundamental domain. This implies that, up to an application of deck transformation, $B_{k_1}=B_{k_2}$ for some $k_1, k_2\in\N, k_1\neq k_2$. This, in turn, implies that $\tilde f^{k_1}(\widetilde\Omega)$ differs from  $\tilde f^{k_2}(\widetilde\Omega)$ by an element of a deck transformation, i.e., $f^{k_1}(\Omega)=f^{k_2}(\Omega)$ for $k_1\neq k_2$, a contradiction.
\qed

\section{A non-example}\label{S:Ex}

\noindent
To illustrate the subtlety of ruling out existence of wandering domains, we give a non-example that is not covered by either Theorem~\ref{T:tori} or Theorem~\ref{T:surf}. 
\begin{proposition}\label{P:Ex}
A Denjoy type $f\in{\rm Diff}^1(\T^2)$ with square wandering domains  does not exist.
\end{proposition}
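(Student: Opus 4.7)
The plan is to apply the transboundary modulus machinery of Section~\ref{S:TMS} to the horizontal curve family $\Gamma_h$ on $\T^2$, exploiting the specific geometry of square wandering domains, and derive a contradiction via a Cauchy--Schwarz comparison. First I would observe that a square has generic shape in the sense of Lemma~\ref{L:UQC}: the only orientation preserving linear maps of $\R^2$ preserving a square are rotations by multiples of $90^\circ$, all of which lie in $SO(2)$. Hence the second part of that lemma gives that $f$ is conformal along $\Lambda$. If $\sigma(\Lambda)=0$, Theorem~\ref{T:tori} applies directly, since squares have bounded geometry, and produces the desired contradiction; so from now on I may assume $0<\sigma(\Lambda)<1$, the strict upper bound holding because the wandering squares are open and nonempty.

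Next I would exhibit an admissible distribution for ${\rm mod}_\Lambda(\Gamma_h)$: take $\rho$ to equal the characteristic function of $\Lambda$ on $\Lambda$ and set $\rho(D_i)$ equal to the horizontal extent of $D_i$. Admissibility holds for any $\gamma\in\Gamma_h$ because, by a degree argument, the horizontal projection of $\gamma$ covers the circle $C_1$, hence the horizontal projections of $\gamma\cap\Lambda$ and of the $\gamma\cap D_i$ cover $C_1$; bounded geometry of squares (horizontal extent at most $s_i\sqrt2$ for side $s_i$) gives total mass at most $2-\sigma(\Lambda)$. Hence ${\rm mod}_\Lambda(\Gamma_h)\le 2-\sigma(\Lambda)<\infty$, and Proposition~\ref{P:Extremaldistr} provides a unique extremal $\rho_h\in L^2\oplus l^2$.

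Since $f$ is isotopic to the identity, $f(\Gamma_h)=\Gamma_h$, and as $f$ is conformal along $\Lambda$, Lemma~\ref{L:Confinv} combined with uniqueness yields $f^*\rho_h=\rho_h$. In particular $\rho_h(f(D_i))=\rho_h(D_i)$, so wandering of the orbits and finiteness of the total mass force $\rho_h(D_i)=0$ for every $i$. Admissibility of $\rho_h$ then reads $\int_{L_y\cap\Lambda}\rho_h\,ds\ge 1$ for almost every horizontal line $L_y$. Slicewise Cauchy--Schwarz gives $\int_{L_y\cap\Lambda}\rho_h^2\,ds\ge 1/\sigma(L_y\cap\Lambda)$; integrating in $y$ and applying a second Cauchy--Schwarz together with the Fubini identity $\int_0^1\sigma(L_y\cap\Lambda)\,dy=\sigma(\Lambda)$ yields
$$
\|\rho_h\|^2\ge\int_0^1\frac{dy}{\sigma(L_y\cap\Lambda)}\ge\frac{1}{\sigma(\Lambda)}.
$$
Combined with the upper bound from the test distribution, this gives $1/\sigma(\Lambda)\le 2-\sigma(\Lambda)$, i.e., $(\sigma(\Lambda)-1)^2\le 0$, forcing $\sigma(\Lambda)=1$ and contradicting $\sigma(\Lambda)<1$.

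The step requiring the most care is the verification, once the discrete part of $\rho_h$ has been eliminated, that the subfamily of horizontal curves $L_y$ with $\sigma(L_y\cap\Lambda)=0$ is $2$-modulus negligible, so that it can be absorbed into the exceptional family for $\rho_h$. Such lines correspond to a measurable set $E\subset[0,1]$, and a standard extremal length computation on $\T^2$ shows that the $2$-modulus of the associated subfamily of $\Gamma_h$ equals the Lebesgue measure of $E$; the mere existence of an admissible $\rho_h$ with vanishing discrete part thus forces this measure to be zero, which is precisely what is needed for the slice integrals to be defined almost everywhere and for the final Cauchy--Schwarz chain to close.
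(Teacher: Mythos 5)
Your proposal is correct, and the machinery is the same as the paper's (generic shape of squares giving conformality along $\Lambda$ via Lemma~\ref{L:UQC}, finiteness of ${\rm mod}_\Lambda(\Gamma_h)$, uniqueness of the extremal distribution, and invariance via Lemma~\ref{L:Confinv}), but the endgame is genuinely different. The paper identifies the extremal distribution explicitly: $\rho_h\equiv1$ on $\Lambda$ and $\rho_h(D_i)=s_i$, with ${\rm mod}_\Lambda(\Gamma_h)=1$, proved by exactly the slicing-plus-Cauchy--Schwarz computation you use, but applied to show that \emph{every} admissible $\rho$ has mass at least $1$. Invariance then says $f$ preserves side lengths, and the contradiction is that $\T^2$ cannot contain infinitely many disjoint squares of equal side length. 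You instead never compute the extremal: you first kill its discrete part by the orbit/$\ell^2$ argument from the proof of Theorem~\ref{T:tori}, and then show that the continuous part alone cannot be admissible unless $\sigma(\Lambda)\ge1$. Your handling of the exceptional slices (the set of $y$ with $\sigma_1(L_y\cap\Lambda)=0$ having measure zero because the corresponding subfamily has $2$-modulus $|E|$) is exactly the care the argument needs. Note that the two conclusions are consistent: under the standing assumption, the extremal satisfies both $\rho_h(D_i)=s_i$ and $\rho_h(D_i)=0$, and either reading yields the absurdity.

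One quantitative caveat: your argument closes only barely. Since the squares are axis-parallel, the horizontal extent of $D_i$ is exactly $s_i$, not $s_i\sqrt2$; with the correct value your test distribution has mass exactly $1$ and the final inequality reads $1/\sigma(\Lambda)\le1$, a clean contradiction with $\sigma(\Lambda)<1$. With your generous factor $\sqrt2$ the inequality becomes $1/\sigma\le2-\sigma$, i.e.\ $(\sigma-1)^2\le0$, which still forces $\sigma=1$ but has no slack: had you bounded the horizontal extent by, say, $2s_i$, the inequality $1/\sigma\le4-3\sigma$ admits solutions $\sigma\in[1/3,1)$ and the proof would fail. So the comparison constant matters; the paper's route of pinning down the exact extremal avoids this sensitivity.
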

 Here, by \emph{square} wandering domains in 2-torus $\T^2=\R^2/\Z^2$ we mean domains in $\T^2$ whose lifts to $\R^2$ are non-trivial geometric squares with sides parallel to the coordinate axes; see a cartoon Figure~\ref{F:TS}.
\begin{figure}
[htbp]
\begin{center}
\includegraphics[height=40mm]{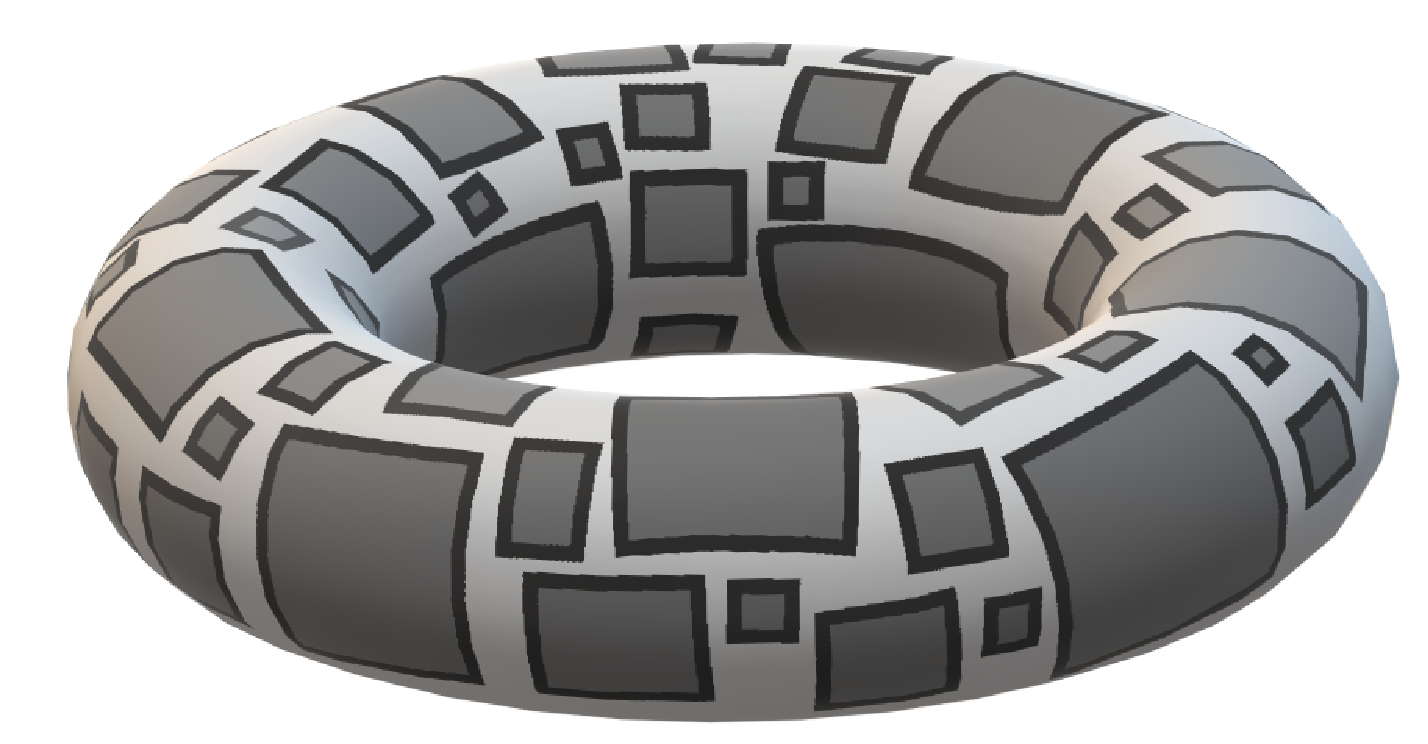}
\caption{
Square wandering domains in $\T^2$.
}
\label{F:TS}
\end{center}
\end{figure}
To rule such a diffeomorphism out we cannot use Theorem~\ref{T:tori} because we do not assume that the minimal set has measure zero, and we cannot use  Theorem~\ref{T:surf} because  wandering domains are not assumed to be  uniformly relatively separated.

\begin{proof}[Proof of Proposition~\ref{P:Ex}.]
Fir\-st, if exists, a map $f$ is conformal along its minimal set $\Lambda$ by Corollary~\ref{C:CtoLUQC}. Next, as in the proof of Theorem~\ref{T:tori}, Section~\ref{S:Tori}, we consider the horizontal curve family $\Gamma_h$ in $\T^2$, and argue that the extremal distribution $\rho_h$ for its transboundary modulus  is the one that assigns constant value 1 to $\Lambda$ and to each complementary square domain its side length. Indeed, trivially, $\rho_h$ is an admissible mass distribution whose total mass equals 1. Let $\rho$ be an arbitrary admissible mass distribution for $\Gamma_h$.
We consider the subfamily $\Gamma_{sh}$ of $\Gamma_h$ that consists of \emph{straight} horizontal paths  $\{\gamma_y(t)=t+iy\: 0\le t\le1\}$, for each $y,\ 0\le y<1$, where $\gamma_y(0)$ and $\gamma_y(1)$ are identified.
Since $\rho$ is extremal for $\Gamma_h$, it is admissible for $\Gamma_{sh}$, and thus
$$
1\le\int_{\gamma_y\cap\Lambda}\rho\, dt+\sum_{\gamma_y\cap D_i\neq\emptyset}\rho(D_i),
$$
for almost every $y\in[0,1)$, where $D_i,\ i\in I$, are the square wandering domains. Integrating this inequality over $y\in[0,1)$, we obtain
$$
1\le\int_\Lambda\rho\, dtdy+\sum_{i\in I}\rho(D_i)s_i,
$$
where $s_i$ is the side length of the square $D_i,\ i\in I$.  
Applying the Cauchy-Schwarz inequality we get
$$
1\le\left(\int_\Lambda\rho^2dtdy+\sum_{i\in I}\rho(D_i)^2\right)^{1/2}\cdot\left(|\Lambda|+\sum_{i\in I}s_i^2\right)^{1/2},
$$
where $|\Lambda|$ denotes the area of $\Lambda$. Since the second factor of the right-hand side equals 1, we conclude that ${\rm mod}_\Lambda(\Gamma_h)\ge1$. But ${\rm mass}(\rho_h)=1$, and thus ${\rm mod}_\Lambda(\Gamma_h)=1$, and the mass distribution $\rho_h$ is extremal.

The map $f$ being isotopic to the identity preserves the family $\Gamma_h$, and, in particular,
$$
{\rm mod}_\Lambda(f(\Gamma_h))={\rm mod}(\Gamma_h).
$$ 
By Lemma~\ref{L:Confinv}, the pullback $f^*(\rho_h)$ is also an extremal mass distribution.
The uniqueness of  the extremal mass distribution claimed in Proposition~\ref{P:Extremaldistr} implies that $\rho_h=f^*(\rho_h)$. In particular, the map $f$ must preserve the side lengths of the squares $D_i,\ i\in I$. The contradiction comes from the fact that one cannot have infinitely many disjoint squares in $\T^2$ of the same side length.
\end{proof}

\end{document}